\newtheorem{theorem}{Theorem}[section]
\newtheorem{corollary}[theorem]{Corollary}
\newtheorem{lemma}[theorem]{Lemma}
\newtheorem{proposition}[theorem]{Proposition}
\newtheorem{question}[theorem]{Question}
\begin{document}

\title{Namioka spaces and strongly Baire spaces}

%    Information for second author
\author{V.V.Mykhaylyuk}
\address{Department of Mathematics\\
Chernivtsi National University\\ str. Kotsjubyn'skogo 2,
Chernivtsi, 58012 Ukraine}
\email{vmykhaylyuk@ukr.net}
%\thanks{Support information for the second author.}

%    General info
\subjclass[2000]{Primary 54C08, 54C30, 54C05}

%\date{January 1, 1994 and, in revised form, June 22, 1994.}

\commby{Ronald A. Fintushel}

%\dedicatory{This paper is dedicated to our authors.}

\keywords{separately continuous functions, Namioka property, Baire space, caliber of topological space}

\begin{abstract}
A notion of strongly Baire space is introduced. Its definition is a transfinite development of some equivalent reformulation of the Baire space definition. It is shown that every strongly Baire space is a Namioka space and every $\beta-\sigma$-unfavorable space is a strongly Baire space.
\end{abstract}

\maketitle
\section{Introduction}

Investigation of the discontinuity points set of separately continuous functions, that is functions which are continuous with respect every variable, was beginning by R.Baire in [1] and was continued by many mathematicians. A Namioka's result [2] on the continuity points set of separately continuous functions, defined on the product of two spaces one of which is compact, has become a new impulse to development of this investigation.

A topological space $X$ is called {\it strongly countably complete}, if there exists a sequence $({\mathcal U}_n)_{n=1}^{\infty}$ of open covers of $X$ such that for every centered sequence $(F_n)^{\infty}_{n=1}$ of closed in $X$ sets $F_n$ such that for every $n\in \mathbb N$ there exists $U\in {\mathcal U}_n$
such that $F_n\subseteq U$, the intersection $\bigcap\limits_{n=1}^{\infty}F_n$ is nonempty.

\begin{theorem}[Namioka] Let $X$ be a strongly countably complete space, $Y$ be a compact space and $f:X\times Y \to \mathbb R$ be a separately continuous function. Then there exists an everywhere dense in $X$ $G_{\delta}$-set $A\subseteq X$ such that the function $f$ is continuous at every point of set $A\times Y$.
\end{theorem}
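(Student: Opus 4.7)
My plan is to produce, for each $\varepsilon>0$, an open dense set $G_\varepsilon\subseteq X$ on which the oscillation $\omega_f(x,y)=\inf\{\mathrm{diam}\,f(W):W\ni(x,y)\text{ open}\}$ satisfies $\omega_f(x,y)<\varepsilon$ for every $y\in Y$. Since every strongly countably complete space is a Baire space, the intersection $A=\bigcap_{n\ge 1}G_{1/n}$ is then a dense $G_\delta$-subset of $X$, and $\omega_f\equiv 0$ on $A\times Y$ yields continuity of $f$ at every point of $A\times Y$.

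Openness of $G_\varepsilon$ is the easy half and follows from compactness of $Y$. If $x\in G_\varepsilon$, for each $y\in Y$ pick an open rectangle $V_y\times W_y\ni(x,y)$ with $\mathrm{diam}\,f(V_y\times W_y)<\varepsilon$; extract a finite subcover $W_{y_1},\dots,W_{y_k}$ of $Y$ and let $V=\bigcap_{i=1}^{k}V_{y_i}$. For any $x'\in V$ and $y\in Y$, picking $i$ with $y\in W_{y_i}$ gives $(x',y)\in V_{y_i}\times W_{y_i}$, so $\omega_f(x',y)<\varepsilon$; hence $V\subseteq G_\varepsilon$.

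Density is the substantive part and the main obstacle. Fix nonempty open $U_0\subseteq X$ and $\varepsilon>0$; I want $x_0\in U_0$ with $\omega_f(x_0,y)<\varepsilon$ for every $y\in Y$. Let $(\mathcal U_n)$ be the sequence of open covers witnessing strong countable completeness. The plan is to construct recursively, for each $n\ge 1$, a nonempty open set $U_n\subseteq U_{n-1}$ and a finite open cover $\mathcal W_n$ of $Y$ such that $\overline{U_n}$ lies in some member of $\mathcal U_n$ and $\mathrm{osc}(f|_{U_n\times W})<\varepsilon$ for every $W\in\mathcal W_n$. The recursive step is the heart of Namioka's argument and is where I expect the main technical difficulty: one must exploit separate continuity in both variables together with compactness of $Y$ to obtain simultaneous oscillation control on rectangles, even though separate continuity alone controls $f$ only on slices. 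The standard device is a finite iteration in which, starting from a chosen point of $U_{n-1}$, one alternates between applying continuity of $f(\cdot,y)$ to shrink the $X$-factor and using continuity of $f(x,\cdot)$ combined with compactness of $Y$ to refine the finite cover of $Y$, closing the loop after finitely many steps by a triangle-inequality estimate and careful bookkeeping of representatives in each $W\in\mathcal W_n$; an additional shrinking puts $\overline{U_n}$ inside an element of $\mathcal U_n$.

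With the recursion complete, $(\overline{U_n})_{n\ge 1}$ is a decreasing, hence centred, sequence of closed subsets of $X$, each contained in a member of $\mathcal U_n$; strong countable completeness produces $x_0\in\bigcap_n\overline{U_n}\subseteq U_0$. For any $y\in Y$ and any $n$, choose $W\in\mathcal W_n$ with $y\in W$; then $(x_0,y)\in U_n\times W$, a rectangle of $f$-oscillation $<\varepsilon$, so $\omega_f(x_0,y)<\varepsilon$. Hence $x_0\in U_0\cap G_\varepsilon$, which establishes density of $G_\varepsilon$ and completes the proof.
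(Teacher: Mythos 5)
The paper only quotes this theorem from Namioka's article [2] and gives no proof of its own, so your attempt can only be judged on its internal merits. The outer frame is fine: $G_\varepsilon=\{x:\omega_f(x,y)<\varepsilon\ \forall y\in Y\}$ is indeed open by the compactness argument you give, continuity on $A\times Y$ for $A=\bigcap_n G_{1/n}$ follows, and a strongly countably complete space is Baire. The problem is that the entire content of the theorem is concentrated in the ``recursive step'' that you explicitly leave as a black box, and the step as you have formulated it is not merely technically difficult --- it is false in the generality in which you would need it. You ask, for an arbitrary nonempty open $U_{n-1}$, for a nonempty open $U_n\subseteq U_{n-1}$ and a finite open cover $\mathcal W_n$ of $Y$ with $\mathrm{osc}(f|_{U_n\times W})<\varepsilon$ for every $W\in\mathcal W_n$. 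But if such $U_n$ and $\mathcal W_n$ existed, then for every $x\in U_n$ and $y\in Y$ the rectangle $U_n\times W$ (with $W\in\mathcal W_n$, $y\in W$) would witness $\omega_f(x,y)<\varepsilon$, i.e.\ $U_n\subseteq G_\varepsilon$; density of $G_\varepsilon$ would follow already at $n=1$, with no use of the covers $\mathcal U_n$ or of the intersection property at all. That would prove the Namioka conclusion for \emph{every} topological space $X$, which is known to fail: Talagrand's example (reference [5] of this paper) is a Baire --- even $\alpha$-favorable --- space that is not a Namioka space. The fact that your completeness hypothesis is invoked only after the recursion, where it would be redundant if the recursion succeeded, is the symptom of this miscalibration.

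The correct use of strong countable completeness is structurally different: one cannot obtain uniform oscillation control on whole open rectangles at each finite stage. Instead one chooses, alongside the shrinking sets $U_n$ (arranged with $\overline{U_{n+1}}\subseteq U_n$ and $\overline{U_n}$ inside a member of $\mathcal U_n$), finitely many \emph{points} $x_1,\dots,x_n$ and finite subsets of $Y$ so that $f(x,\cdot)$ is, on each $U_n$, uniformly close on $Y$ to $f(x_j,\cdot)$ for some $j\le n$ (up to $\varepsilon/4$, say); the oscillation estimate is then extracted only at a point $x_0\in\bigcap_n\overline{U_n}$ produced by the completeness hypothesis, by a compactness/cluster-point argument applied to the sequence $(f(x_n,\cdot))$ in $C(Y)$. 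A secondary, fixable slip: you place $x_0$ in $\bigcap_n\overline{U_n}$ but then treat $U_n\times W$ as a neighborhood of $(x_0,y)$, which requires $\overline{U_{n+1}}\subseteq U_n$, a condition you did not impose. As it stands, the proposal is a correct statement of the standard proof's scaffolding with its load-bearing wall missing, and the wall cannot be built in the shape you have drawn.
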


The following notions were introduced in [3].

A mapping $f:X\times Y \to \mathbb R$ {\it has the Namioka property} if there exists a dense in $X$ $G_{\delta}$-set $A\subseteq X$ such that $A\times Y\subseteq C(f)$, where by $C(f)$ we denote the set of the joint continuity points set of mapping $f$.

A Baire space $X$ is called {\it Namioka space}, if for every compact space $Y$ every separately continuous function $f:X\times Y\to \mathbb R$ has the Namioka property.

It was obtained in [4] that Namioka spaces are closely connected with topological games.

Let ${\mathcal P}$ be a system of subsets of topological space $X$. We define a $G_{\mathcal P}$-game on $X$, in which the players $\alpha$ and $\beta$ participate.
A nonempty open in $X$ set $U_0$ is the first move of $\beta$ and a nonempty open in $X$ set $V_1\subseteq U_0$ and set $P_1\in {\mathcal P}$ are the first move of $\alpha$. Further $\beta$ chooses a nonempty open in $X$ set $U_1\subseteq V_1$ and $\alpha$ chooses a nonempty open in $X$ set $V_2\subseteq U_1$ and a set $P_2\in {\mathcal P}$ and so on. The player $\alpha$ wins if $(\bigcap\limits_{n=1}^{\infty}V_n)\bigcap (\overline{\bigcup\limits_{n=1}^{\infty}P_n})\ne\O$. Otherwise
$\beta$ wins.

A topological space $X$ is called {\it $\alpha$-favorable in the $G_{{\mathcal P}}$-game} if $\alpha$ has a winning strategy in this game. A topological space $X$ is called {\it $\beta$-unfavorable in the $G_{{\mathcal P}}$-game} if $\beta$ has no winning strategy in this game. Clearly, any $\alpha$-favorable topological space $X$ is a $\beta$-unfavorable space.

In the case of ${\mathcal P}=\{X\}$ the game $G_{{\mathcal P}}$ is the classical Choquet game and $X$ is $\beta$-unfavorable in this game if and only if $X$ is a Baire space (see [3]). If ${\mathcal P}$ is the system of all finite (or one-point) subsets of $X$ then $G_{{\mathcal P}}$-game is called a {\it $\sigma$-game}.

J.~Saint-Raymond shows in [3] that for usage the topological games method in these investigations it is enough to require a weaker condition of $\beta$-unfavorability instead of the $\alpha$-favorability. He proved that every $\beta-\sigma$-unfavorable space is a Namioka space and generalized the Christensen result.

A further development of this technique leads to a consideration of another topological games which based on wider systems ${\mathcal P}$ of subsets of a topological space $X$.

Let $T$ be a topological space and ${\mathcal K}(T)$ be a collection of all compact subsets of $T$. Then $T$ is said to be {\it ${\mathcal K}$-countably-determined} if there exist a subset $S$ of the topological space ${\mathbb N}^{\mathbb N}$ and a mapping $\varphi:S\to {\mathcal K}(T)$ such that for every open in $T$ set $U\subseteq T$ the set $\{s\in S: \varphi(s)\subseteq U\}$ is open in $S$ and $T=\bigcup\limits_{s\in S} \varphi (s)$; and it is called {\it ${\mathcal K}$-analytical} if there exists such a mapping $\varphi$ for the set $S={\mathbb N}^{\mathbb N}$.

A set $A$ in a topological space $X$ is called {\it bounded} if for any continuous function $f:X\to \mathbb R$ the set $f(A)=\{f(a): a\in A\}$ is bounded.

The following theorem gives further generalizations of Saint-Raymond result.

\begin{theorem}\label{th:1.2}  Any $\beta$-unfavorable in $G_{\mathcal P}$-game topological space $X$ is a Namioka space if :

$(i)$  ${\mathcal P}$ is the system of all compact subsets of $X$ ({\bf M.~Talagrand} \cite{T});

$(ii)$ ${\mathcal P}$ is the system of all ${\mathcal K}$-analytical subsets of $X$ ({\bf G.~Debs} \cite{D});

$(iii)$ ${\mathcal P}$ is the system of all bounded subsets of $X$ ({\bf O.~Maslyuchenko} \cite{Ma});

$(iv)$  ${\mathcal P}$ is a system of all ${\mathcal K}$-countable-determined subsets of $X$ ({\bf V.~Rybakov} \cite{R}).
\end{theorem}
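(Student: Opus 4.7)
The plan is to prove all four parts by a common game-theoretic scheme following Saint-Raymond, with the choice of $\mathcal{P}$ affecting only the construction of $\beta$'s strategy. Fixing a separately continuous $f : X \times Y \to \mathbb{R}$ with $Y$ compact, I would reduce the Namioka conclusion to showing that for each $\varepsilon > 0$ the set
\[
D_\varepsilon = \{x \in X : \omega_f(x,y) \geq \varepsilon \text{ for some } y \in Y\}
\]
is nowhere dense in $X$, where $\omega_f$ denotes the joint oscillation of $f$. The required dense $G_\delta$ set of joint continuity points of $f$ over $Y$ is then $A = X \setminus \bigcup_{n=1}^{\infty} \overline{D_{1/n}}$.

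For the density step, I would fix $\varepsilon > 0$ and a nonempty open $U_0 \subseteq X$ and suppose toward contradiction that $\overline{D_\varepsilon}$ meets every nonempty open $V \subseteq U_0$. This supplies, in every such $V$, a witness $(x,y) \in X \times Y$ with $\omega_f(x,y) \geq \varepsilon$, and these witnesses would be used to design a strategy $\sigma$ for $\beta$ in the $G_{\mathcal{P}}$-game starting from $U_0$. At round $n$, after $\alpha$ responds with $(V_n, P_n)$, $\beta$ would select a suitable $y_n \in Y$, exploit the oscillation of $f(\cdot, y_n)$ at some point of $V_n$, and shrink to an open $U_n \subseteq V_n$ encoding enough information about the sections $f(\cdot, y_k)$ for $k \leq n$ and about $\bigcup_{k \leq n} P_k$ so that any potential limit point in $(\bigcap_n V_n) \cap \overline{\bigcup_n P_n}$ would yield a contradiction with separate continuity at some $(x^*, y^*)$ with $y^*$ a cluster point of $\{y_n\}$ in $Y$.

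The main obstacle, and the point at which the four parts genuinely diverge, is verifying that $\sigma$ is winning against every play of $\alpha$, i.e., that $(\bigcap_n V_n) \cap \overline{\bigcup_n P_n} = \emptyset$ in every run. Each cited author accomplishes this with a compactness-and-selection argument tailored to the specific class $\mathcal{P}$: direct compactness of each $P_n$ in case (i); a diagonal selection along the $\mathbb{N}^{\mathbb{N}}$-parameter of the analyticity witness in case (ii); a reduction of bounded sets to a compact image via suitable extensions of continuous functions in case (iii); and a hybrid of parameter-space compactness with diagonalization in case (iv). In each case, the resulting winning $\beta$-strategy contradicts the $\beta$-unfavorability of $X$ in the $G_{\mathcal{P}}$-game, yielding nowhere-density of $D_\varepsilon$ and completing the proof. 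A fully unified argument is elusive precisely because this compactness step must be rebuilt for each system $\mathcal{P}$, which is why the theorem is stated as a compilation of four separately published results.
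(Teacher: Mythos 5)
This statement is not proved in the paper at all: Theorem~\ref{th:1.2} is a survey compilation of four previously published results (Talagrand, Debs, Maslyuchenko, Rybakov), stated with citations as motivation for the notion of strongly Baire space; the only thing the author adds is the remark that $(iv)\Rightarrow(ii)\Rightarrow(i)$ and $(iii)\Rightarrow(i)$. So there is no in-paper argument to compare yours against, and the honest standard is whether your text would stand alone as a proof.

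It would not, and the gap is exactly where you locate it yourself. The reduction to showing that each set $D_\varepsilon$ is nowhere dense is correct and standard (with $Y$ compact, $D_\varepsilon$ is closed, $\beta$-unfavorability in any $G_{\mathcal P}$-game implies $X$ is Baire, and $A=X\setminus\bigcup_n D_{1/n}$ is then a dense $G_\delta$ with $A\times Y\subseteq C(f)$). But the entire mathematical content of each of the four results lies in (a) the precise construction of $\beta$'s strategy --- which $y_n$ to pick, what ``enough information about the sections $f(\cdot,y_k)$ and about $\bigcup_{k\le n}P_k$'' means concretely, typically a uniform lower bound on oscillation over $U_n$ witnessed against the sets $P_k$ --- and (b) the verification that the strategy is winning, which is where the compactness, ${\mathcal K}$-analyticity, boundedness, or ${\mathcal K}$-countable-determinedness of the sets in $\mathcal P$ is actually used. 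Your proposal names these steps and correctly observes that they must be rebuilt for each class $\mathcal P$, but then defers them wholesale to the cited authors. As written this is an accurate roadmap of how the four proofs are organized, not a proof; to make it one you would have to carry out at least one of the four cases in full, and in case (iii) in particular the phrase ``reduction of bounded sets to a compact image via suitable extensions of continuous functions'' conceals the main difficulty rather than resolving it.
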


It is easy to see that $(iv)\Rightarrow (ii) \Rightarrow (i)$ and $(iii) \Rightarrow (i)$.

It was interesting in this connection to obtain the characterization of Namioka spaces using their internal structure without topoplogical games (analogously as in the classical definition od Baire space). In this paper we study the notion of strongly Baire space. It introduced in the form which is a transfinite generalization of one reformulation of the definition of Baire space. We show that every $\beta-\sigma$-unfavorable space is strongly Baire space and every strongly Baire space is a Namioka space. Moreover, we study sufficient conditions on a topological space $X$ which provide the metrizability of every compact $Y\subseteq C_p(X)$.

\section{Strongly Baire space and Namioka property}

It this section we introduce the notion of strongly Baire space and prove some results on relations between strongly Baire spaces and Namioka spaces and
$\beta-\sigma$-unfavorable spaces.

Let $\alpha$ be an ordinal. We say that {\it a net $(B_{\xi}:\xi<\alpha)$ of sets $B_{\xi}$ occupies a net $(A_{\xi}:\xi<\alpha)$ of sets $A_\xi$} if

$(i)$  $A_{\xi}\subseteq B_{\xi}$ for every $\xi<\alpha$;

$(ii)$ $A_{\beta}\subseteq \bigcup\limits_{\xi<\beta} B_{\xi}$ for every limited ordinal $\beta<\alpha$.

A topological space $X$ is called {\it strongly Baire}, if for every ordinal $\alpha$, an increasing  net $(A_{\xi}:\xi<\alpha)$ of closed in $X$ sets $A_{\xi}$ and an increasing net $(B_{\xi}:\xi<\alpha)$ of closed in $X$ sets $B_{\xi}$ which occupies the net $(A_{\xi}:\xi<\alpha)$ the following condition holds
$$
{\rm int}(\bigcup\limits_{\xi<\alpha} A_{\xi}) \subseteq \overline{\bigcup\limits_{\xi<\alpha} {\rm int}(B_{\xi})},
$$
where by ${\rm int}(C)$ we denote the interior of $C$. For $\alpha=\omega_0$ we obtain that
$${\rm int}(\bigcup\limits_{n\in \mathbb N} A_{n}) \subseteq \overline{\bigcup\limits_{n\in\mathbb N} {\rm int}(A_{n})}$$
for every sequence $(A_n)^{\infty}_{n=1}$ of closed in $X$ sets $A_n$. This is equivalent to the fact that $X$ is Baire. Hence the definition of strongly Baire space is a transfinite strengthening of the Baire condition.

We will use the following two proposition from [9]. The first of they illustrates a property of the dependence on some coordinates of continuous functions defined on compacts. The second proposition illustrates a relation between the dependence and the Namioka property of separately continuous functions. Together these propositions lead to consider the nets from the definition of strongly Baire space.

\begin{proposition}\label{p:2.1} Let $Y\subseteq {\mathbb R}^T$ be a compact space, $Z$ be a metric space, $f:Y\to Z$ be a separately continuous function, $\varepsilon \geq 0$ and set $S\subseteq T$ such that $|f(y')-f(y'')|_Z\leq\varepsilon$ for every $y',y''\in Y$ with $y'|_S=y''|_S$. Then for every $\varepsilon'>\varepsilon$ there exist a finite set $S_0\subseteq S$ and $\delta>0$ such that $|f(y')-f(y'')|_Z\leq \varepsilon'$ for every $y',y''\in Y$ with $|y'(s)-y''(s)|<\delta$ for every $s\in S_0$.
\end{proposition}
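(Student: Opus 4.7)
The plan is to argue by contradiction using compactness of $Y\times Y$ in the product topology inherited from $\mathbb{R}^T\times\mathbb{R}^T$. For each finite $S_0\subseteq S$ and each $\delta>0$, set
$$
W_{S_0,\delta}=\{(y',y'')\in Y\times Y:\,|y'(s)-y''(s)|<\delta \text{ for every } s\in S_0\},
$$
a basic open neighborhood (in $Y\times Y$) of the ``$S$-fiber diagonal'' $D_S=\{(y',y''):\,y'|_S=y''|_S\}$. The pairs $(S_0,\delta)$, directed by $(S_0,\delta)\le (S_0',\delta')\iff S_0\subseteq S_0'$ and $\delta'\le\delta$, satisfy $\bigcap_{S_0,\delta}W_{S_0,\delta}=D_S$, and by hypothesis $|f(y')-f(y'')|_Z\le\varepsilon$ on $D_S$.

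Fix $\varepsilon'>\varepsilon$ and suppose the conclusion fails. Then for every index $(S_0,\delta)$ I can pick $(y'_{S_0,\delta},y''_{S_0,\delta})\in W_{S_0,\delta}$ with $|f(y'_{S_0,\delta})-f(y''_{S_0,\delta})|_Z>\varepsilon'$. Compactness of $Y\times Y$ yields a cluster point $(y_0',y_0'')$ of this net. For any fixed $s\in S$, the defining inequality is eventually satisfied for arbitrarily small $\delta$, so coordinatewise convergence in $\mathbb{R}^T$ forces $y_0'(s)=y_0''(s)$; hence $y_0'|_S=y_0''|_S$ and the hypothesis gives $|f(y_0')-f(y_0'')|_Z\le\varepsilon<\varepsilon'$.

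The main obstacle is to promote this to an actual contradiction, which requires transferring the strict bound $>\varepsilon'$ from the net $(y'_\alpha,y''_\alpha)$ to the limit $(y_0',y_0'')$. Joint continuity of $f$ would close the argument at once; since only separate continuity is available, I would handle this by modifying each witness pair coordinate-by-coordinate along $T\setminus S_0$, using continuity in a single variable (together with the hypothesis on $D_S$) to successively replace each coordinate of $y''_{S_0,\delta}$ outside $S_0$ with the corresponding coordinate of $y'_{S_0,\delta}$ while controlling the total change in $f$-values by less than $(\varepsilon'-\varepsilon)/3$. After this preparation the nontrivial variation between the modified points is concentrated in the finite coordinate set $S_0$, and a routine finite-dimensional limit argument inside $\pi_{S_0}(Y)\subseteq\mathbb{R}^{S_0}$ then delivers the contradiction with the hypothesis on $D_S$.
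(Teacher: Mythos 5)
The paper itself gives no proof of this proposition: it is imported verbatim from reference [9] (``We will use the following two proposition[s] from [9]''), so there is no in-paper argument to compare yours against; I can only judge your proposal on its own terms. Your compactness frame is the standard one and is essentially a complete proof once the hypothesis on $f$ is read correctly. In this paper Proposition 2.1 is only ever applied to the sections $y\mapsto f(x,y)$ with $x$ fixed, and these are continuous on $Y$ in the product topology; ``separately continuous'' for a one-variable map on a compact $Y\subseteq\mathbb{R}^T$ has no other workable meaning here. With $f$ continuous on $Y$, the set $F=\{(y',y'')\in Y\times Y:\ |f(y')-f(y'')|_Z\ge\varepsilon'\}$ is closed, so your cluster point $(y_0',y_0'')$ of the witness net lies in $F\cap D_S$, and $\varepsilon'\le|f(y_0')-f(y_0'')|_Z\le\varepsilon$ is the desired contradiction. (Equivalently: if the conclusion fails, the closed sets $\{(y',y''):\ |y'(s)-y''(s)|\le\delta\ \mbox{for all}\ s\in S_0\}\cap F$ have the finite intersection property and their total intersection is a nonempty subset of $D_S\cap F$.) Your first two paragraphs plus this one observation finish the proof.

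The genuine gap is your third paragraph, and as written it would fail. The coordinate-by-coordinate surgery along $T\setminus S_0$ is not a viable argument for three reasons. First, $T\setminus S_0$ is in general uncountable, so ``successively replacing'' coordinates is a transfinite process with no control at limit stages, and there is no reason the accumulated change in $f$ stays below $(\varepsilon'-\varepsilon)/3$: neither separate continuity nor the hypothesis on $D_S$ gives any quantitative bound on the effect of changing a single coordinate, and each such change could move $f$ by a fixed positive amount. Second, and more basically, $Y$ is an arbitrary compact subset of $\mathbb{R}^T$, not a product, so the point obtained from $y''_{S_0,\delta}$ by replacing one coordinate with the corresponding coordinate of $y'_{S_0,\delta}$ need not belong to $Y$ at all, and $f$ is then undefined on it. Third, even granting the surgery, the resulting pair agrees off $S_0$ but the hypothesis only controls pairs agreeing on all of $S$ simultaneously, so the announced ``routine finite-dimensional limit argument'' does not connect back to the assumption. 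The repair is not to strengthen the surgery but to discard it: invoke the joint continuity of $f$ on $Y$ that is actually available in every application of this proposition, at which point your own first two paragraphs already close the argument.
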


\begin{proposition}\label{p:2.2} Let $X$ be a Baire space, $Y\subseteq {\mathbb R}^T$ be a compact space, $f:X\times Y\to \mathbb R$ be a separately continuous function. Then the following conditions are equivalent:

$(i)$ $f$ has the Namioka property;

$(ii)$ for every open in $X$ nonempty set $U$ and real $\varepsilon >0$ there exist an open in $X$ nonempty set $U_0\subseteq U$ and at most countable set $S_0\subseteq T$ such that $|f(x,y')-f(x,y'')|\leq \varepsilon$ for every $x\in U_0$ and $y',y''\in Y$ with $y'|_{S_0}=y''|_{S_0}$.
\end{proposition}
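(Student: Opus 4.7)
For the forward direction $(i)\Rightarrow(ii)$, fix nonempty open $U\subseteq X$ and $\varepsilon>0$ and pick $x_0\in A\cap U$, where $A$ is the dense $G_\delta$ on which $A\times Y\subseteq C(f)$. For each $y\in Y$, joint continuity at $(x_0,y)$ gives an open neighborhood $U_y\ni x_0$ and an open $W_y\ni y$ on which $f$ varies by less than $\varepsilon/3$. Compactness of $Y$ yields a finite subcover $W_{y_1},\dots,W_{y_k}$; intersecting the corresponding $U_{y_i}$ with $U$ yields an open $U_0\ni x_0$, $U_0\subseteq U$, such that $|f(x,y)-f(x_0,y)|\leq\varepsilon/3$ for every $x\in U_0$, $y\in Y$. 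Next I would apply Proposition~\ref{p:2.1} to the continuous function $f(x_0,\cdot):Y\to\mathbb R$ iteratively: for each $n$, taking $S=T$ and $\varepsilon=0$, $\varepsilon'=1/n$, yields a finite $S_n\subseteq T$; the union $S_0=\bigcup_n S_n$ is countable and has the property that $y'|_{S_0}=y''|_{S_0}$ forces $f(x_0,y')=f(x_0,y'')$. The triangle inequality then delivers $(ii)$.

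For the substantive direction $(ii)\Rightarrow(i)$, I would work with the oscillation function $\omega_f$. Set $F_n=\{(x,y)\in X\times Y:\omega_f(x,y)\geq 1/n\}$, which is closed, and $E_n=\pi_X(F_n)$, which is closed because $Y$ is compact (closed projection). It suffices to show each $E_n$ has empty interior, for then $A=X\setminus\bigcup_n E_n$ is a dense $G_\delta$ (by Baireness of $X$) and every $x\in A$ satisfies $\omega_f(x,y)=0$ for all $y\in Y$, i.e., $\{x\}\times Y\subseteq C(f)$.

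Assume toward contradiction a nonempty open $U\subseteq E_n$. Apply $(ii)$ with $\varepsilon=1/(10n)$ to obtain nonempty open $U_0\subseteq U$ and countable $S_0\subseteq T$. For every $x\in U_0$, Proposition~\ref{p:2.1} applied to the continuous function $f(x,\cdot):Y\to\mathbb R$ with $S=S_0$, $\varepsilon=1/(10n)$, $\varepsilon'=1/(5n)$ produces a finite $S_x\subseteq S_0$ and $\delta_x>0$ with the stated smallness. As $(S,\delta)$ ranges over the countable collection of finite subsets of $S_0$ and positive rationals, the sets
\[
A_{S,\delta}=\{x\in U_0:\forall y',y''\in Y,\ (|y'(s)-y''(s)|<\delta\ \forall s\in S)\Rightarrow|f(x,y')-f(x,y'')|\leq 1/(5n)\}
\]
are closed in $U_0$ by separate continuity of $f$ in $x$, and they cover $U_0$. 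Since $U_0$ inherits Baireness from $X$, the Baire category theorem produces $(S_1,\delta_1)$ with $A_{S_1,\delta_1}$ having nonempty open interior $U_1\subseteq U_0$.

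Finally, a tube-lemma style argument yields the contradiction. Fix $x_1\in U_1$ and any $y_0\in Y$; the neighborhoods $V(y^*)=\{y\in Y:|y(s)-y^*(s)|<\delta_1\ \forall s\in S_1\}$ cover $Y$, so pick a finite subcover indexed by $y^*_1,\dots,y^*_k$. By separate continuity of $f$ in $x$, there is an open $W\subseteq U_1$ with $x_1\in W$ such that $|f(x,y^*_i)-f(x_1,y^*_i)|\leq 1/(10n)$ for every $x\in W$ and each $i$. Four triangle inequalities bound the oscillation of $f$ on each $W\times V(y^*_i)$ by $3/(5n)<1/n$; consequently $\omega_f(x_1,y)<1/n$ for every $y\in Y$, contradicting $x_1\in E_n$. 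The main obstacle is the Baire bootstrapping step: one must recognize that the pointwise Proposition~\ref{p:2.1} data, indexed by uncountably many $x$, can be collapsed to a single uniform $(S_1,\delta_1)$ via a countable closed cover, and then budget the $\varepsilon$'s so the eventual tube estimate is strictly below $1/n$.
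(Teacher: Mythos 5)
The paper does not actually prove Proposition~\ref{p:2.2}: it is imported, together with Proposition~\ref{p:2.1}, from reference [9] ("We will use the following two proposition from [9]"), so there is no in-paper argument to compare yours against. Judged on its own, your proof is correct. In $(i)\Rightarrow(ii)$, the finite-subcover step gives $|f(x,y)-f(x_0,y)|\le\varepsilon/3$ on $U_0\times Y$ because $(x,y)$ and $(x_0,y)$ lie in a common box $U_{y_i}\times W_{y_i}$, and iterating Proposition~\ref{p:2.1} with $S=T$, $\varepsilon=0$, $\varepsilon'=1/n$ legitimately yields a countable $S_0$ on which $f(x_0,\cdot)$ is determined; the triangle inequality then gives $2\varepsilon/3\le\varepsilon$. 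In $(ii)\Rightarrow(i)$, all the key points check out: $E_n=\pi_X(F_n)$ is closed because projection along a compact factor is a closed map; the sets $A_{S,\delta}$ are relatively closed in $U_0$ (intersections over pairs $(y',y'')$ of closed conditions in $x$) and cover $U_0$ once one shrinks each $\delta_x$ to a rational, so Baireness of the open subspace $U_0$ produces the uniform $(S_1,\delta_1)$; and in the tube step, comparing each $y\in V(y_i^*)$ with the center $y_i^*$ invokes exactly the defining condition of $A_{S_1,\delta_1}$, with the budget $1/(5n)+1/(10n)+1/(10n)+1/(5n)=3/(5n)<1/n$ contradicting $x_1\in E_n$. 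This is the standard oscillation/closed-projection/uniformization argument for such equivalences, and the one genuinely nontrivial idea --- collapsing the uncountably many pointwise data $(S_x,\delta_x)$ to a single pair via a countable closed cover --- is present and handled correctly.
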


\begin{theorem}\label{th:2.3} Every strongly Baire space is a Namioka space.
\end{theorem}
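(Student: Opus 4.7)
The plan is to invoke Proposition \ref{p:2.2} to translate the Namioka property into condition (ii), embed $Y$ into $\mathbb R^T$ for a suitable index set $T$, and argue by contradiction using a transfinite chain of closed level sets to which the strongly Baire condition is applied. Assume for contradiction there exist a nonempty open $U\subseteq X$ and $\varepsilon>0$ such that for every nonempty open $V\subseteq U$ and every at most countable $S\subseteq T$ one finds $x\in V$ and $y',y''\in Y$ with $y'|_S=y''|_S$ and $|f(x,y')-f(x,y'')|>\varepsilon$. The natural closed sets are
\[
F_S^\eta:=\{x\in X:|f(x,y')-f(x,y'')|\le\eta\text{ for all }y',y''\in Y\text{ with }y'|_S=y''|_S\},
\]
which are closed by the separate continuity of $f$ in the first variable; the failure of (ii) says that $F_S^\eta\cap U$ has empty interior for every at most countable $S$ and every $\eta\le\varepsilon$.

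I would then well-order $T=\{t_\eta:\eta<\kappa\}$ and set $S_\xi:=\{t_\eta:\eta<\xi\}$, $A_\xi:=F_{S_\xi}^{\varepsilon/4}$, $B_\xi:=F_{S_\xi}^{\varepsilon/2}$. These are increasing closed nets with $A_\xi\subseteq B_\xi$. The occupation condition at a limit $\lambda$ follows from Proposition \ref{p:2.1}: for $x\in A_\lambda$, applying it to the continuous function $f(x,\cdot)$ on $Y$ with $S=S_\lambda$, threshold $\varepsilon/4$, and $\varepsilon'=\varepsilon/2$ produces a \emph{finite} $S_0\subseteq S_\lambda=\bigcup_{\xi<\lambda}S_\xi$; being finite, $S_0\subseteq S_\xi$ for some $\xi<\lambda$, and then $x\in F_{S_\xi}^{\varepsilon/2}=B_\xi$. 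Similarly, Proposition \ref{p:2.1} applied to each $f(x,\cdot)$ with $S=T$, threshold $0$, and $\varepsilon'=\varepsilon/4$ yields a finite $S(x)\subseteq T$ with $x\in F_{S(x)}^{\varepsilon/4}$, and $S(x)\subseteq S_\xi$ for some $\xi<\kappa$. Hence $U\subseteq\bigcup_{\xi<\kappa}A_\xi$, and since $U$ is open, $U\subseteq\mathrm{int}\bigl(\bigcup_{\xi<\kappa}A_\xi\bigr)$.

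The strongly Baire hypothesis now gives $U\subseteq\overline{\bigcup_{\xi<\kappa}\mathrm{int}(B_\xi)}$. A contradiction will follow from showing that $\mathrm{int}(B_\xi)\cap U=\emptyset$ for every $\xi<\kappa$, as then the right-hand side lies in the closed set $X\setminus U$, forcing the absurd $U\subseteq X\setminus U$. For countable $S_\xi$ this emptiness is immediate from the failure of (ii). The main obstacle is the case of uncountable $|S_\xi|$: there a potential nonempty open subset $V\subseteq B_\xi$ must be reduced, via pointwise application of Proposition \ref{p:2.1}, to a union $V\subseteq\bigcup_{S_0\subseteq S_\xi\text{ finite}}F_{S_0}^{\varepsilon}$ over uncountably many finite $S_0$'s; the task is to extract a countable subfamily still covering some nonempty open subset of $V$, so that Baire category within $V$ yields a contradiction with the countable case of (ii). Making this reduction rigorous is the technical heart of the argument.
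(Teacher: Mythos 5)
Your setup coincides with the paper's: the closed sets $F_S^\eta$, the nets $A_\xi$, $B_\xi$ indexed by initial segments of a well-ordering of $T$, and the verification of the occupation condition and of $U\subseteq\bigcup_\xi A_\xi$ via Proposition \ref{p:2.1} are all exactly what is needed. The gap is in the endgame, and you have correctly located it but not closed it. After the strongly Baire condition produces some $\xi$ with ${\rm int}(B_\xi)\cap U\ne\emptyset$, your plan is to rule this out for \emph{every} $\xi$; for $\xi$ of uncountable cardinality you propose to cover a nonempty open $V\subseteq B_\xi$ by the sets $F_{S_0}^{\varepsilon}$ over the finite subsets $S_0\subseteq S_\xi$ and then extract a countable subfamily still covering a nonempty open set. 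No such extraction is available in a general strongly Baire space --- there are uncountably many finite subsets of $S_\xi$ and no countable chain condition is assumed --- so this step, which you yourself flag as ``the technical heart,'' is a genuine missing idea rather than a routine verification.

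The paper resolves it differently: it does \emph{not} try to show ${\rm int}(B_\xi)\cap U=\emptyset$ for uncountable $\xi$. Instead it accepts a nonempty open $U_1\subseteq U$ with $\overline{U_1}\subseteq B_{\beta_1}$, notes that $\beta_1$ must be an uncountable ordinal (otherwise the countable set $\{t_\alpha:\alpha\le\beta_1\}$ together with $U_1$ would witness condition $(ii)$ of Proposition \ref{p:2.2}), and then \emph{restarts the entire construction inside $U_1$} with the re-enumerated index set $S_1=\{t_\alpha:\alpha\le\beta_1\}$, whose order type $\gamma_1$ satisfies $\gamma_1\le\beta_1$. Each iteration produces a new uncountable ordinal $\beta_{k+1}<\gamma_k\le\beta_k$, yielding an infinite strictly decreasing sequence of ordinals --- a contradiction. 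Note that this iteration also forces a change in your choice of tolerances: since Proposition \ref{p:2.1} only gives a conclusion for $\varepsilon'>\varepsilon$ strictly, each stage must relax the bound slightly, so one fixes in advance a strictly increasing sequence $\varepsilon_0<\varepsilon_1<\cdots\to\varepsilon$ and uses $\varepsilon_{2k-2}$, $\varepsilon_{2k-1}$ at stage $k$, rather than the two fixed constants $\varepsilon/4$ and $\varepsilon/2$.
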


\begin{proof} Let $X$ be a strongly Baire space, $Y\subseteq {\mathbb R}^T$ be a compact space, $f:X\times Y\to \mathbb R$ be a separately continuous function, $U$ be an open in $X$ nonempty set and $\varepsilon>0$. According to Theorem 2.2, it is enough to prove that there exist an at most countable set $S_0\subseteq T$ and an open in $X$ nonempty set $U_0\subseteq U$ such that $|f(x,y')-f(x,y'')|\leq \varepsilon$ for every $x\in U_0$ and $y',y''\in Y$ with $y'|_{S_0}=y''|_{S_0}$.

Suppose the contrary, that is for every at most countable set $S\subseteq T$ and open in $X$ nonempty set $U'\subseteq U$ there exist $x\in U'$ and $y',y''\in Y$ with $y'|_{S}=y''|_{S}$ such that $|f(x,y')-f(x,y'')|> \varepsilon$. Let $\omega$ be the first ordinal with the cardinality $|T|$, $T=\{t_{\alpha}:\alpha<\omega\}$ and $(\varepsilon_n)^{\infty}_{n=0}$ be a strictly increasing sequence of reals $\varepsilon_n>0$ which tends to $\varepsilon$.

For every $\alpha<\omega$ we denote by $A^{(1)}_{\alpha}$ the set of all $x\in \overline{U}$ such that $|f(x,y')-f(x,y'')|\leq \varepsilon_0$ for every $y',y''\in Y$ with $y'(t_{\xi})=y''(t_{\xi})$ for $\xi<\alpha$ and by $B^{(1)}_{\alpha}$ we denote the set of all $x\in\overline{U}$ such that $|f(x,y')-f(x,y'')|\leq \varepsilon_1$ for every $y',y''\in Y$ with $y'(t_{\xi}=y''(t_{\xi})$ for $\xi\leq\alpha$. It follows from the continuity of $f$ with respect to $x$ that all sets $A_{\alpha}^{(1)}$ and $B^{(1)}_{\alpha}$ are closed, besides $A^{(1)}_{\alpha}\subseteq B^{(1)}_{\alpha}$ for every $\alpha<\omega$ and the sequences $(A^{(1)}_{\alpha}:\alpha<\omega)$ and $(B^{(1)}_{\alpha}:\alpha<\omega)$ are increasing. Moreover, it follows from Proposition 2.1 that for every limited ordinal $\alpha<\omega$ and every $x\in A^{(1)}_{\alpha}$ there exists an ordinal $\xi<\alpha$ such that $x\in B^{(1)}_{\xi}$, that is $A^{(1)}_{\alpha}\subseteq \bigcup\limits_{\xi<\alpha}B^{(1)}_{\xi}$. Thus, the sequence $(B^{(1)}_{\alpha}:\alpha<\omega)$ occupies the sequence $(A^{(1)}_{\alpha}:\alpha<\omega)$. It follows from Proposition 2.1 that $U\subseteq\bigcup\limits_{\alpha<\omega}A^{(1)}_{\alpha}$.
Since $X$ is strongly Baire, $\bigcup\limits_{\alpha<\omega}{\rm int}(B^{(1)}_{\alpha})\ne\O$. Therefore there exist an ordinal $\beta_1<\omega$ and an open in  $X$ nonempty set $U_1\subseteq U$ such that $\overline{U_1}\subseteq B^{(1)}_{\beta}$. Note that $\beta_1$ is a uncountable ordinal. Really, otherwise the at most countable set $S=\{t_{\alpha}:\alpha\leq\beta_1\}$ and the open set $U_1$ such that $|f(x,y')-f(x,y'')|\leq \varepsilon_1\leq\varepsilon$ for every $x\in U_1$ and $y',y''\in Y$ with $y'|_S=y''|_S$, a contradiction.

Put $S_1=\{t_{\alpha}:\alpha\leq\beta_1\}$. Let $\gamma_1$ be the first ordinal with the cardinality $|S_1|$ and $S_1=\{s^{(1)}_{\alpha}:\alpha<\gamma_1\}$. Clearly that $\gamma_1\leq\beta_1$. Note that $|f(x,y')-f(x,y'')|\leq \varepsilon_1$ for every $x\in\overline{U}_1$ and $y',y''\in Y$ with $y'|_{S_1}=y''|_{S_1}$. For every $\alpha<\gamma_1$ we denote by $A^{(2)}_{\alpha}$ the set of all $x\in \overline{U}_1$ such that $|f(x,y')-f(x,y'')|\leq \varepsilon_2$ for every $y',y''\in Y$ with $y'(s^{(1)}_{\xi})=y''(s^{(1)}_{\xi})$ for $\xi<\alpha$ and by $B^{(2)}_{\alpha}$ we denote the set of all $x\in\overline{U}_1$ such that $|f(x,y')-f(x,y'')|\leq \varepsilon_3$ for every $y',y''\in Y$ with $y'(s^{(1)}_{\xi})=y''(s^{(1)}_{\xi})$ for $\xi\leq\alpha$.
All sets $A^{(2)}_{\alpha}$ and $B^{(2)}_{\alpha}$ for $\alpha<\gamma_1$ are closed. It follows from 2.1 that the sequence $(B^{(2)}_{\alpha}:\alpha<\gamma_1)$ occupies the sequence $(A^{(2)}_{\alpha}:\alpha<\gamma_1)$ and $U_1\subseteq\bigcup\limits_{\alpha<\gamma_1}A^{(2)}_{\alpha}$. Since $X$ is strongly Baire, there exist an ordinal $\beta_2<\gamma_1$ and an open in $X$ nonempty set $U_2\subseteq U_1$ such that $\overline{U}_2\subseteq B^{(2)}_{\beta_2}$. We can to show analogously as for the ordinal $\beta_1$ that $\beta_2$ is a uncountable ordinal.

Put $S_2=\{s^{(1)}_{\alpha}:\alpha<\beta_2\}$. Let $\gamma_2$ be the first ordinal of the cardinality $|S_2|$ and $S_2=\{s^{(2)}_{\alpha}:\alpha<\gamma_2\}$. Clearly that $\gamma_2\leq\beta_2$. For every $\alpha<\gamma_2$ we denote by $A^{(3)}_{\alpha}$ the set of all $x\in\overline{U}_2$ such that $|f(x,y')-f(x,y'')|\leq \varepsilon_4$ for every $y',y''\in Y$ with $y'(s^{(2)}_{\xi})=y''(s^{(2)}_{\xi})$ for $\xi<\alpha$ and by $B^{(3)}_{\alpha}$ we denote the set of all $x\in\overline{U}_2$ such that $|f(x,y')-f(x,y'')|\leq \varepsilon_5$ for every $y',y''\in Y$ with $y'(s^{(2)}_{\xi})=y''(s^{(2)}_{\xi})$ for $\xi\leq\alpha$. It follows from Proposition 2.1 that the sequence $(B^{(3)}_{\alpha}:\alpha<\gamma_2)$ occupies the sequence $(A^{(3)}_{\alpha}:\alpha<\gamma_2)$ and $U_2\subseteq\bigcup\limits_{\alpha<\gamma_2}A^{(3)}_{\alpha}$. Therefore there exist an at most countable ordinal $\beta_3<\gamma_2$ and an open in $X$ nonempty set $U_3\subseteq U_2$ such that $\overline{U}_3\subseteq B^{(3)}_{\beta_3}$.

Continuing this process to infinity we obtain a sequence of ordinals
$$\beta_1\geq\gamma_1>\beta_2
\geq\gamma_2>\beta_3\geq\gamma_3>\dots,
$$
a contradiction.
\end{proof}

\begin{theorem}\label{th:2.4} Every $\beta-\sigma$-unfavorable space is a strongly Baire space.\end{theorem}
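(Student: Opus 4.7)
My plan is to argue the contrapositive: if $X$ is not strongly Baire, then $\beta$ has a winning strategy in the $\sigma$-game on $X$, contradicting the hypothesis. Fix a counterexample to strongly Baire — an ordinal $\lambda$, increasing closed nets $(A_\xi)_{\xi<\lambda}$ and $(B_\xi)_{\xi<\lambda}$ with $(B_\xi)$ occupying $(A_\xi)$, and a nonempty open $U$ with $U\subseteq\bigcup_{\xi<\lambda}A_\xi$ but $U\cap\mathrm{int}(B_\xi)=\emptyset$ for every $\xi<\lambda$. A preliminary reduction lets me assume $\lambda$ is a regular cardinal: the successor case is trivial (as $A_\mu\subseteq B_\mu$), and for singular limits a cofinal subnet of length $\mathrm{cf}(\lambda)$ is again a counterexample with occupation preserved. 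The case $\lambda=\omega$ reduces to the classical Baire property, which follows from $\beta$-$\sigma$-unfavorability by ignoring $\alpha$'s finite sets $P_n$ and restricting to the Choquet game. I therefore take $\lambda>\omega$ regular from now on.

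The strategy for $\beta$ is defined as follows. For $x\in U$ set $\eta(x):=\min\{\xi<\lambda:x\in A_\xi\}$ and $\zeta(x):=\min\{\xi<\lambda:x\in B_\xi\}$, and note $\zeta\le\eta$ since $A_\xi\subseteq B_\xi$. Play $U_0:=U$; at stage $n\ge 1$, after $\alpha$'s move $(V_n,P_n)$, put
\[
\lambda_n:=\sup\{\eta(p):p\in(P_1\cup\cdots\cup P_n)\cap U\}+1,
\]
which is strictly below $\lambda$ by regularity, and respond with $U_n:=V_n\setminus B_{\lambda_n}$. This is a legal $\beta$-move: otherwise $V_n$ would be an open subset of the closed set $B_{\lambda_n}$, hence contained in $\mathrm{int}(B_{\lambda_n})$, contradicting $V_n\subseteq U$.

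To verify that $\beta$ wins, I suppose toward contradiction that some $x\in\bigcap_nV_n\cap\overline{\bigcup_nP_n}$ exists. From $x\in U_n$ for all $n\ge 1$ I obtain $\zeta(x)>\lambda_n$, hence $\zeta(x)\ge\lambda^*:=\sup_n\lambda_n$. If $x\in P_m\cap U$ for some $m$, the definition of $\lambda_m$ gives $\eta(x)<\lambda_m\le\lambda^*$, so $\zeta(x)\le\eta(x)<\lambda^*$, a contradiction. Otherwise $x$ is a limit of points of $\bigcup_nP_n\cap U$, each lying in some $A_\xi$ with $\xi<\lambda^*$, and closedness of $A_{\lambda^*}$ forces $x\in A_{\lambda^*}$. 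The main obstacle is the final ordinal dichotomy on $\lambda^*$: when $\lambda^*=\mu+1$ is a successor, the $\lambda_n$ must stabilize at $\lambda^*$, so $\bigcup_nP_n\cap U\subseteq A_\mu$ and hence $\eta(x)\le\mu<\lambda^*$; when $\lambda^*$ is a limit, the occupation property yields $A_{\lambda^*}\subseteq\bigcup_{\xi<\lambda^*}B_\xi$, whence $\zeta(x)<\lambda^*$. Both branches contradict $\zeta(x)\ge\lambda^*$. The two essential design choices — responding with $V_n\setminus B_{\lambda_n}$ rather than $V_n\setminus A_{\lambda_n}$, and the "$+1$" in the definition of $\lambda_n$ — are precisely what makes both branches of this dichotomy close.
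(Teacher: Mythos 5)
Your proof is correct and follows essentially the same route as the paper's: $\beta$ answers each move by deleting $B_{\xi_n}$ for an ordinal $\xi_n$ dominating the $A$-ranks of all points played so far, reduces the countable-cofinality case to the classical Baire property, and uses the occupation condition at the supremum to exclude any cluster point of $\alpha$'s points. The only cosmetic differences are that the paper keeps the original (possibly singular) index ordinal and makes its ordinals $\alpha_n$ strictly increasing, so the supremum is automatically a limit and your successor-case branch of the final dichotomy does not arise, while your reindexing to a regular cardinal tacitly uses a continuous cofinal sequence to preserve occupation.
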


\begin{proof} Let a topological space $X$ is not strongly Baire. That is there exist an ordinal $\omega$ and increasing nets $(A_{\alpha}:\alpha<\omega)$ and
$(B_{\alpha}:\alpha<\omega)$ of closed in $X$ sets $A_{\alpha}$ and $B_{\alpha}$ such that the net $(B_{\alpha}:\alpha<\omega)$ occupies the net $(A_{\alpha}:\alpha<\omega)$ and $$U_0={\rm int}(\bigcup\limits_{\alpha<\omega}A_{\alpha})\setminus \overline{\bigcup\limits_{\alpha<\omega}{\rm int}(B_{\alpha})}\ne \O.$$

Clearly that $\omega$ is a limited ordinal. According to [10, Theorem 10, p.282] the ordinal $\omega$ is confinal to the first ordinal $\omega'$ with the same confinality. If $\omega'=\omega_0$, then $X$ is not Baire. That is the space $X$ is a $\beta$-unfavorable space, in particular, $X$ is $\beta-\sigma$-unfavorable.

We consider the case of $\omega'>\omega_0$. We describe an winner strategy $\tau$ for the player $\beta$ in $\sigma$-game. Let $\alpha_0<\omega$, $U_0$ is the first move of $\beta$, $V_1\subseteq U_0$ be an open in $X$ nonempty set and $x_1\in X$. If $x_1\in U_0$, then $x_1\in \bigcup\limits_{\alpha<\omega}A_{\alpha}$. Therefore there exists an ordinal $\alpha_1<\omega$ such that $x_1\in A_{\alpha_1}$. If $x_1\not \in U_0$, then we put $\alpha_1=1$. The closed set $B_{\alpha_1}$ is nowhere dense in $U_0$. Therefore the open set $U_1=\tau(U_0,V_1,x_1)=V_1\setminus B_{\alpha_1}$ is nonempty.

Let $V_2\subseteq U_1$ be an open in $X$ nonempty set and $x_2\in X$. If $x_2\in U_0$, then we choose an ordinal $\alpha_2$ such that $\alpha_1<\alpha_2<\omega$ and  $x_2\in A_{\alpha_2}$. If $x_2\not \in U_0$, then we put $\alpha_2=\alpha_1+1$. Now we put $U_2=\tau(U_0,V_1,x_1,U_1,V_2,x_2)= V_2\setminus B_{\alpha_2}$.

Continuing this process to infinity we obtain an strictly increasing sequence $(\alpha_n)^{\infty}_{n=1}$ of ordinals $\alpha_n$, an sequence $(x_n)^{\infty}_{n=1}$ of $x_n\in X$ and decreasing sequences $(U_n)^{\infty}_{n=0}$ and $(V_n)^{\infty}_{n=1}$ of open in $X$ nonempty sets $U_n$ and $V_n$ respectively such that $V_n\subseteq U_{n-1}$, $U_n=V_{n-1}\setminus B_{\alpha_n}$, where $\alpha_n=\alpha_{n-1}+1$, if $x_n\not \in U_0$, and $\alpha_n>\alpha_{n-1}$ such that $x_n\in A_{\alpha_n}$, if $x_n\in U_0$.

We show that $(\bigcap\limits^{\infty}_{n=0}U_n)\bigcap\overline{\{x_n:n\in\mathbb N\}}=\O$. Put $\gamma={\rm sup}\,\alpha_n$. Since $\omega'>\omega_0$, $\gamma<\omega$. We consider the sets $A=\{x_n:n\in\mathbb N, x_n\not\in U_0\}$ and $B=\{x_n:n\in\mathbb N, x_n\in U_0\}$. Note that $A\cap U_0=\O$. Therefore
$\overline{A}\cap(\bigcap\limits^{\infty}_{n=0}U_n)=\O$. Since the sequence $(A_{\alpha}:\alpha<\omega)$ is increasing, according to the choice of ordinals $\alpha_n$ we have $B\subseteq A_{\gamma}$. Recall that the sequence $(B_{\alpha}:\alpha<\omega)$ occupies the sequence $(A_{\alpha}:\alpha<\omega)$. Therefore for limited ordinal $\gamma$ we have
$$
\overline{B}\subseteq A_{\gamma}\subseteq\bigcup\limits_{\alpha<\gamma}B_{\alpha}=
\bigcup\limits_{n=1}^{\infty}B_{\alpha_n}.
$$
On other hand, according to the construction, we have $U_n\cap B_{\alpha_n}=\O$. Therefore $(\bigcap\limits_{n=0}^{\infty}U_n)\cap(\bigcup\limits^{\infty}_{n=1}B_{\alpha_n})=\O$.
Hence, $\overline{B}\cap(\bigcap\limits_{n=0}^{\infty}U_n)=\O$.

Thus, $(\bigcap\limits^{\infty}_{n=0}U_n)\cap\overline{\{x_n:n\in\mathbb
N\}}=\O$. Hence, the strategy $\tau$ is a winner strategy for $\beta$ in $\sigma$-game and $X$ is a $\beta-\sigma$-favorable space.
\end{proof}

\section{Properties of strongly Baire spaces}

In this section we investigate properties of strongly Baire spaces which related with the calibers of these spaces. The proof of the following statement is obvious.

\begin{proposition}\label{p:3.1} Нехай ${\rm int}(\bigcup\limits_{\xi<\alpha} A_{\xi}) \subseteq \overline{\bigcup\limits_{\xi<\alpha} {\rm int}(A_{\xi})}$ for every increasing sequence $(A_{\xi}:\xi<\alpha)$ of closed in topological space $X$ sets $A_{\xi}$. Then $X$ is a strongly Baire space.
\end{proposition}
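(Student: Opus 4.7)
The hypothesis (once one parses through the garbled first word) is that for every ordinal $\alpha$ and every increasing net $(A_{\xi}:\xi<\alpha)$ of closed sets, ${\rm int}(\bigcup_{\xi<\alpha}A_{\xi})\subseteq\overline{\bigcup_{\xi<\alpha}{\rm int}(A_{\xi})}$. I need to derive from this the strongly Baire condition, which involves \emph{two} nets: an increasing net $(A_{\xi})$ of closed sets and an increasing net $(B_{\xi})$ of closed sets that occupies it, and the conclusion ${\rm int}(\bigcup_{\xi<\alpha}A_{\xi})\subseteq\overline{\bigcup_{\xi<\alpha}{\rm int}(B_{\xi})}$.

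The plan is to throw away $(A_{\xi})$ on the outside and apply the hypothesis to $(B_{\xi})$ alone. Since $(B_{\xi}:\xi<\alpha)$ is itself an increasing net of closed sets, the hypothesis yields
\[
{\rm int}\Bigl(\bigcup_{\xi<\alpha}B_{\xi}\Bigr)\subseteq\overline{\bigcup_{\xi<\alpha}{\rm int}(B_{\xi})}.
\]
On the other hand, condition $(i)$ of the occupation relation gives $A_{\xi}\subseteq B_{\xi}$ for every $\xi$, hence $\bigcup_{\xi<\alpha}A_{\xi}\subseteq\bigcup_{\xi<\alpha}B_{\xi}$, and by monotonicity of the interior operator
\[
{\rm int}\Bigl(\bigcup_{\xi<\alpha}A_{\xi}\Bigr)\subseteq{\rm int}\Bigl(\bigcup_{\xi<\alpha}B_{\xi}\Bigr).
\]
Chaining the two inclusions produces exactly the strongly Baire conclusion. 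Note that condition $(ii)$ of the occupation relation (the limit-ordinal clause $A_{\beta}\subseteq\bigcup_{\xi<\beta}B_{\xi}$) is not even needed for this implication; it is the fact that the interiors appearing on the right-hand side of the target inclusion are interiors of the \emph{larger} sets $B_{\xi}$ that makes the argument essentially automatic.

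There is no real obstacle here: the only subtlety is recognising that the hypothesis is being applied not to the given net $(A_{\xi})$ but to the occupying net $(B_{\xi})$, and that monotonicity of ${\rm int}$ handles the rest. This is presumably why the author classifies the proof as obvious.
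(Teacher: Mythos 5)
Your argument is correct and is exactly the one the paper has in mind: the paper omits the proof as ``obvious,'' and the obvious route is precisely yours, namely applying the hypothesis to the occupying net $(B_{\xi}:\xi<\alpha)$ and then using $A_{\xi}\subseteq B_{\xi}$ together with monotonicity of ${\rm int}$. Your observation that condition $(ii)$ of the occupation relation is not needed here is also accurate.
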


Recall (see [11 p.16]) that a cardinal $\aleph$ is called {\it a caliber of the topological space $X$}, if for every family $(U_{\alpha}:\alpha\in A)$ of nonempty open in $X$ sets $U_{\alpha}$ з $|A|=\aleph$ there exists a set $B\subseteq A$ such that $|B|=\aleph$ and $\bigcap\limits_{\alpha\in B}U_{\alpha}\ne\O$.

\begin{proposition}\label{p:3.2} Let a Baire space $X$ such that every regular uncountable cardinal is a caliber of $X$. Then ${\rm int}(\bigcup\limits_{\xi<\alpha} A_{\xi}) \subseteq \overline{\bigcup\limits_{\xi<\alpha} {\rm int}(A_{\xi})}$ for every increasing sequence $(A_{\xi}:\xi<\alpha)$ of closed in $X$ sets $A_{\xi}$, in particular, $X$ is strongly Baire.
\end{proposition}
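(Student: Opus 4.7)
The plan is to invoke Proposition 3.1 and reduce the statement to proving the interior inclusion
$\mathrm{int}(\bigcup_{\xi<\alpha} A_{\xi}) \subseteq \overline{\bigcup_{\xi<\alpha} \mathrm{int}(A_{\xi})}$
for every increasing net of closed sets in $X$. I would argue by contradiction: pick a nonempty open $V \subseteq \mathrm{int}(\bigcup_{\xi<\alpha} A_\xi)$ disjoint from $\overline{\bigcup_{\xi<\alpha} \mathrm{int}(A_\xi)}$. Then $V \cap \mathrm{int}(A_\xi) = \mathrm{int}_V(V\cap A_\xi)$ is empty for each $\xi$, so every closed set $V \cap A_\xi$ is nowhere dense in $V$, while $V = \bigcup_{\xi<\alpha}(V\cap A_\xi)$.

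Before using caliber I would reduce to the case where $\alpha$ is a regular cardinal. If $\alpha$ is a successor the inclusion is trivial, since the union then equals its largest term. Otherwise let $\kappa = \mathrm{cf}(\alpha)$ and fix a strictly increasing cofinal sequence $(\xi_i : i < \kappa)$ in $\alpha$. The subsequence $(A_{\xi_i})_{i<\kappa}$ is still increasing, its union coincides with $\bigcup_{\xi<\alpha} A_\xi$, and $\bigcup_{i<\kappa}\mathrm{int}(A_{\xi_i}) \subseteq \bigcup_{\xi<\alpha}\mathrm{int}(A_\xi)$, so it is enough to establish the inclusion for the net indexed by $\kappa$.

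If $\kappa = \omega_0$, then $V$ is a nonempty open subspace of the Baire space $X$, hence Baire itself, yet it is a countable union of its closed nowhere-dense subsets $V\cap A_{\xi_n}$, a contradiction. If $\kappa$ is uncountable, it is a caliber of $X$ by hypothesis. The sets $V_\xi := V \setminus A_\xi$ ($\xi<\kappa$) are open in $X$ and nonempty (since $A_\xi$ is nowhere dense in $V$). Caliber produces $B \subseteq \kappa$ with $|B| = \kappa$ and a point $x \in \bigcap_{\xi \in B} V_\xi$. Regularity of $\kappa$ forces $B$ to be cofinal (any $\kappa$-sized subset has supremum $\kappa$). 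Since $x \in V \subseteq \bigcup_{\xi<\kappa} A_\xi$, there is $\eta < \kappa$ with $x \in A_\eta$; choosing $\xi \in B$ with $\xi \geq \eta$ and using monotonicity gives $x \in A_\xi$, contradicting $x \in V_\xi$.

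The step I expect to be the main obstacle is recognizing how to deploy the caliber hypothesis: the natural pieces produced by the contradiction assumption are the \emph{closed} nowhere-dense sets $V\cap A_\xi$, whereas caliber speaks about \emph{open} families. The trick is to pass to the open complements $V\setminus A_\xi$ in $V$, and then couple the conclusion of caliber (a point lying in $\kappa$-many of them) with the automatic cofinality of $\kappa$-sized subsets of a regular cardinal and with the monotonicity of $(A_\xi)$ to produce the contradiction.
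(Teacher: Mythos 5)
Your proof is correct and follows essentially the same route as the paper's: dispose of the successor case, pass to a cofinal subsequence indexed by the regular cardinal $\mathrm{cf}(\alpha)$, settle the case $\mathrm{cf}(\alpha)=\omega_0$ by the Baire property, and in the uncountable case apply the caliber hypothesis to the decreasing family of nonempty open sets of the form $V\setminus A_{\xi}$ to produce a point of $V$ lying outside every $A_{\xi}$, contradicting $V\subseteq\bigcup_{\xi}A_{\xi}$. The only cosmetic difference is that the paper argues directly on $U=\mathrm{int}(\bigcup_{\xi}A_{\xi})$ rather than first shrinking to an open $V$ disjoint from $\overline{\bigcup_{\xi}\mathrm{int}(A_{\xi})}$.
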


\begin{proof} Let $(A_{\xi}:\xi<\alpha)$ be a increasing sequence of closed in $X$ sets $A_{\xi}$. If $\alpha$ is not limited ordinal, that is $\alpha=\beta+1$, then
$$
{\rm int}(\bigcup\limits_{\xi<\alpha} A_{\xi}) = {\rm int}(A_{\beta}) \subseteq \overline{\bigcup\limits_{\xi<\alpha} {\rm int}(A_{\xi})}.
$$

Let $\alpha$ is a limited ordinal. According to [10, Theorem 10, p.282], the cardinal $\alpha$ is confinal to the first ordinal $\omega$ with the same confinality. This implies that the cardinal $\aleph=|\omega|$ is regular.

We choose an strictly increasing sequence $(\xi_{\gamma}:\gamma<\omega$ of ordinals $\xi_{\gamma}$ such that $\sup\limits_{\gamma<\omega}\,\xi_{\gamma}=\alpha$. For every $\gamma<\omega$ we put $F_{\gamma}=A_{\xi_{\gamma}}$. Since the sequence $(A_{\xi}:\xi<\alpha)$ is increasing, the sequence $(F_{\gamma}:\gamma<\omega)$ is increasing too, $\bigcup\limits_{\xi<\alpha}A_{\xi}=\bigcup\limits_{\gamma<\omega}F_{\gamma}$ and $\bigcup\limits_{\xi<\alpha}{\rm int}(A_{\xi}) = \bigcup\limits_{\gamma<\omega}{\rm int}(F_{\gamma})$.

If $\omega=\omega_0$, then it follows from the fact that $X$ is Baire that
$${\rm int}(\bigcup\limits_{\gamma<\omega_0}F_{\gamma})\subseteq
\overline{\bigcup\limits_{\gamma<\omega_0}{\rm int}(F_{\gamma})},$$
that is ${\rm int}(\bigcup\limits_{\xi<\alpha}A_{\xi})\subseteq \overline{\bigcup\limits_{\xi<\alpha}{\rm int}(A_{\xi})}$.

Let $\omega$ is an uncountable ordinal. Suppose that $U=\bigcup\limits_{\gamma<\omega}{\rm int}(F_{\gamma})\not\subseteq \overline{\bigcup\limits_{\gamma<\omega}{\rm int}(F_{\gamma})}$. Then $U\not\subseteq F_{\gamma}$, that is $U_{\gamma}=U\setminus F_{\gamma}\ne \O$ for every $\gamma<\omega$. Since $\aleph$ is a caliber of $X$, for the family $(U_{\gamma}:\gamma<\omega)$ there exists $x_0\in X$ such that the set $\Gamma=\{\gamma<\omega: x_0\in U_{\gamma}\}$ has the cardinality $\aleph$. Since $\omega$ is the first ordinal of the cardinality $\aleph$, $\sup\Gamma=\omega$. Note that $(U_{\gamma}:\gamma<\omega)$ is decreasing sequence, therefore $\bigcap\limits_{\gamma<\omega}U_{\gamma}=\bigcap\limits_{\gamma\in\Gamma}U_{\gamma}\ni x_0$. Thus, $\bigcap\limits_{\gamma<\omega}U_{\gamma}\ne\O$, that is $U\not\subseteq\bigcup\limits_{\gamma<\omega}F_{\gamma}$, in particular, $x_0\in U$ and $x_0\not\in\bigcup\limits_{\gamma<\omega}F_{\gamma}$. But this contradicts to $U={\rm int}(\bigcup\limits_{\gamma<\omega}F_{\gamma})$.
\end{proof}

It easy to see that for every separable space $X$ every infinite regular cardinal is a caliber of $X$. Moreover, according to [11, Theorem 0.3.13, p. 16], the product of multipliers with caliber $\aleph$ has the caliber $\aleph$ too. Hence, the following result is true.

\begin{proposition} \label{p:3.3} Let a topological product $X=\prod\limits_{t\in T}X_t$ is a Baire space, besides all spaces $X_t$ are separable. Then $X$ is strongly Baire.
\end{proposition}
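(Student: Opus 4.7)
The plan is to chain together three facts already in hand: (a) separable spaces have every infinite regular cardinal as a caliber, (b) calibers are preserved by arbitrary products (the cited \cite[Theorem 0.3.13, p.~16]{...} result from~[11]), and (c)~Proposition~\ref{p:3.2}, which reduces strong Baireness to Baireness plus calibers. So the whole thing is essentially an assembly argument once those three pieces are combined in the right order.

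Concretely, I would first fix an arbitrary regular uncountable cardinal $\aleph$. Since each factor $X_t$ is separable, $\aleph$ is a caliber of every $X_t$ (this is the easy observation noted immediately before the proposition: in a separable space any $\aleph$-sized family of nonempty open sets must have an $\aleph$-sized subfamily meeting a fixed dense countable set at a common point, hence having common intersection). Then I would invoke the product theorem from~[11] to transfer this caliber to $X=\prod_{t\in T}X_t$. Thus every regular uncountable cardinal is a caliber of $X$.

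Finally, $X$ is Baire by hypothesis, so Proposition~\ref{p:3.2} applies and gives the inclusion
\[
\mathrm{int}\Bigl(\bigcup_{\xi<\alpha} A_{\xi}\Bigr) \subseteq \overline{\bigcup_{\xi<\alpha} \mathrm{int}(A_{\xi})}
\]
for every increasing net $(A_{\xi}:\xi<\alpha)$ of closed sets in $X$, which by Proposition~\ref{p:3.1} implies that $X$ is strongly Baire.

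There is no real obstacle here: the proof is a three-line citation chain, and the only thing to be a little careful about is the quantifier ``every regular uncountable cardinal'' in the hypothesis of Proposition~\ref{p:3.2}, which forces me to verify the caliber property for an arbitrary $\aleph$ rather than a single one. Since both (a) and (b) hold uniformly in $\aleph$, this causes no trouble.
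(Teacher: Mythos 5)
Your proof is correct and follows exactly the route the paper itself takes: the paper's entire justification consists of the two sentences preceding the proposition (every infinite regular cardinal is a caliber of a separable space, and calibers are preserved by products by [11, Theorem 0.3.13]), followed by an appeal to Proposition~\ref{p:3.2}. The only cosmetic difference is your final invocation of Proposition~\ref{p:3.1}, which is unnecessary since Proposition~\ref{p:3.2} already concludes that $X$ is strongly Baire.
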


Further, we shall use the following auxiliary statement (see [12, p.185]), which often be used for the investigation of product properties.

\begin{lemma}[Shanin]\label{l:3.4} Let $\aleph$ be an uncountable regular cardinal, $(T_{\gamma}:\gamma\in \Gamma)$ be a family of finite sets $T_{\gamma}$, moreover $|\Gamma|=\aleph$. Then there exist a set $\Delta\subseteq\Gamma$ and a finite set $S\subseteq \bigcup\limits_{\gamma\in\Gamma}T_{\gamma}$ such that $|\Delta|=\aleph$ and $T_{\beta}\cap T_{\gamma}=S$ for every distinct $\beta, \gamma \in \Delta$.
\end{lemma}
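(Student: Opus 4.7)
The plan is to prove Shanin's lemma by a standard reduction followed by induction on the common size of the sets, which is the classical argument for the $\Delta$-system (sunflower) lemma at an uncountable regular cardinal.

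First I would reduce to the case where all $T_\gamma$ have the same finite cardinality. Partition $\Gamma$ into the countably many classes $\Gamma_n=\{\gamma\in\Gamma:|T_\gamma|=n\}$ for $n\in\mathbb N$. Since $\aleph$ is uncountable and regular, a countable union of sets each of cardinality less than $\aleph$ has cardinality less than $\aleph$, so at least one $\Gamma_n$ must satisfy $|\Gamma_n|=\aleph$. Passing to this subfamily, I may assume $|T_\gamma|=n$ for every $\gamma\in\Gamma$, and I induct on $n$. The base $n=0$ is trivial with $\Delta=\Gamma$ and $S=\emptyset$.

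For the inductive step, split into two cases according to how often single points are used. \emph{Case~1: there exists $t\in\bigcup_{\gamma\in\Gamma}T_\gamma$ belonging to $T_\gamma$ for $\aleph$-many indices $\gamma$.} Let $\Gamma'$ be such an index set and apply the inductive hypothesis to the family $(T_\gamma\setminus\{t\}:\gamma\in\Gamma')$ of sets of size $n-1$; this produces $\Delta\subseteq\Gamma'$ with $|\Delta|=\aleph$ and common pairwise intersection $S'$. Then for distinct $\beta,\gamma\in\Delta$ we have $T_\beta\cap T_\gamma=S'\cup\{t\}$, so $S=S'\cup\{t\}$ works. \emph{Case~2: every point $t\in\bigcup_{\gamma\in\Gamma}T_\gamma$ lies in fewer than $\aleph$ of the sets $T_\gamma$.} I then build, by transfinite recursion of length $\aleph$, a pairwise disjoint subfamily, which yields the conclusion with root $S=\emptyset$.

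The recursion in Case~2 is the step where regularity of $\aleph$ does all the work, and this is the main technical point to verify carefully. Having chosen $\{\gamma_\eta:\eta<\xi\}\subseteq\Gamma$ for some $\xi<\aleph$, set $U=\bigcup_{\eta<\xi}T_{\gamma_\eta}$; this is a union of $|\xi|<\aleph$ finite sets, so $|U|<\aleph$. The set of ``bad'' indices $\{\gamma\in\Gamma:T_\gamma\cap U\ne\emptyset\}$ equals $\bigcup_{u\in U}\{\gamma\in\Gamma:u\in T_\gamma\}$, which is a union of fewer than $\aleph$ sets each of size less than $\aleph$; by the regularity of $\aleph$ the total cardinality is still less than $\aleph$. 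Hence some $\gamma_\xi\in\Gamma$ satisfies $T_{\gamma_\xi}\cap U=\emptyset$, and the recursion continues.

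I expect the main obstacle to be simply bookkeeping: the two-case split must genuinely cover all possibilities (``some point is reused $\aleph$ times'' vs.\ ``no point is''), and in Case~2 the two applications of regularity of $\aleph$ (to control $|U|$ and to control the set of bad indices) are the only nontrivial arithmetic and must be presented cleanly. No other ingredient is needed beyond the pigeonhole reduction, the inductive hypothesis, and these cardinal arithmetic facts.
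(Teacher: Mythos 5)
Your proof is correct: it is the standard $\Delta$-system (sunflower) argument at an uncountable regular cardinal, and every step checks out — the pigeonhole reduction to constant size $n$, the induction on $n$, the exhaustive dichotomy between a point lying in $\aleph$-many sets and no point doing so, and the two uses of regularity in the disjoint-refinement recursion. The paper itself offers no proof to compare against: Lemma~\ref{l:3.4} is quoted as a known auxiliary statement with a citation to [12, p.~185], so your self-contained argument is a genuine addition rather than an alternative route. Two trivial bookkeeping remarks: in Case~2 you should note that $\gamma_\xi$ is automatically distinct from the previously chosen indices because each earlier $T_{\gamma_\eta}$ is nonempty (we are in the case $n\geq 1$) and contained in $U$; and in Case~1 it is worth one line to observe that $t\in T_\beta\cap T_\gamma$ for all $\beta,\gamma$ in the refined family, so that $T_\beta\cap T_\gamma=S'\cup\{t\}$ exactly.
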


\begin{proposition}\label{p:3.5} Let $(X_t:t\in T)$ be a family of separable metric spaces $(X_t,|\cdot-\cdot|_t)$, $X\subseteq \prod\limits_{t\in T}X_{t}$ be a Baire space, which is dense in the space $Y= \prod\limits_{t\in T}X_{t}$ with the topology of uniform convergence on $T$. Then $X$ is a strongly Baire space.
\end{proposition}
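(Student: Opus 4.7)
The plan is to apply Proposition 3.1, which lets me replace the full strongly Baire condition with verifying the inclusion
\[
{\rm int}\Big(\bigcup_{\xi<\alpha} A_\xi\Big) \subseteq \overline{\bigcup_{\xi<\alpha} {\rm int}(A_\xi)}
\]
for every increasing sequence $(A_\xi:\xi<\alpha)$ of closed subsets of $X$. Following the cofinality reduction already used in the proof of Proposition 3.2, I may assume $\alpha$ is the first ordinal of an infinite regular cardinal $\aleph$. The case $\aleph=\aleph_0$ is settled directly by Baireness of $X$, just as in Proposition 3.2.

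For uncountable regular $\aleph$ I argue by contradiction, obtaining a nonempty open $U\subseteq X$ with $U\subseteq\bigcup_\xi A_\xi$ but $U\cap\overline{\bigcup_\xi{\rm int}(A_\xi)}=\emptyset$; then $U_\xi=U\setminus A_\xi$ is a decreasing family of dense open subsets of $U$ whose total intersection is empty. Using that $X$ carries the subspace topology from $Y=\prod_t X_t$ with the uniform convergence topology, for each $\xi$ I pick $y_\xi\in X\cap U_\xi$ and a rational $r_\xi>0$ with the uniform ball $B_Y(y_\xi,r_\xi)\cap X\subseteq U_\xi$. By pigeonhole on $\mathbb{Q}_{>0}$ together with regularity of $\aleph$, I pass to a cofinal subset of indices on which $r_\xi=r$ is constant, and (localizing $U$ if necessary) I may further assume all the $y_\xi$ lie in a common uniform ball $B_Y(y_0,R)\cap X$.

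The heart of the proof is to combine separability of each $X_t$ with Shanin's Lemma 3.4 to extract a cofinal $\Delta\subseteq\alpha$ and a point $y^*\in Y$ satisfying $\sup_t |y^*(t)-y_\xi(t)|_t<r$ for all $\xi\in\Delta$. I would fix a countable $r/4$-net $N_t$ in each $X_t$, assign to $y_\xi$ a coordinatewise approximation $n_\xi\in\prod_t N_t$ within $r/4$, and associate to $\xi$ the finite set $T_\xi\subseteq T$ of coordinates where $n_\xi$ deviates essentially from a fixed baseline selection. Applying Shanin's Lemma to $(T_\xi)$ produces a cofinal $\Delta$-system on which the common-core data $n_\xi|_S$ stabilizes; this lets one assemble $y^*\in Y$ by taking the stabilized values on $S$, the individual $n_\xi|_{T_\xi\setminus S}$ on the disjoint extensions, and the baseline values outside $\bigcup_{\xi\in\Delta}T_\xi$. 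Density of $X$ in $Y$ then gives $x^*\in X$ within uniform distance $r$ of every $y_\xi$, $\xi\in\Delta$, so $x^*\in\bigcap_{\xi\in\Delta}U_\xi$; cofinality of $\Delta$ and the fact that the $U_\xi$ are decreasing force $x^*\in\bigcap_{\xi<\alpha}U_\xi=\emptyset$, the desired contradiction. The principal obstacle is the construction of the finite sets $T_\xi$: in the uniform (rather than product) topology there is no direct cylinder-based Shanin argument as in the proof of Proposition 3.3 via caliber, so the reduction to finite essential-variation sets has to be set up carefully, and it is precisely here that the localization inside a common uniform ball, the separability of each $X_t$, and the density of $X$ in $Y$ enter in an essential way.
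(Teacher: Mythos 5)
Your overall scheme (reduce via Proposition 3.1 and the cofinality argument of Proposition 3.2 to finding a point in $\aleph$-many of the sets $U_\xi$, then use Shanin's Lemma, separability, and density of $X$ in $Y$) is the right family of ideas, and it matches the paper's strategy in spirit: the paper proves that every uncountable regular cardinal is a caliber of $X$ and then simply invokes Proposition 3.2. But your execution has a genuine gap at exactly the place you flag as ``the principal obstacle,'' and that obstacle is not a technicality --- it is fatal to the route you chose. You encode membership $x\in U_\xi$ by the condition that $x$ lie in a \emph{uniform} ball around $y_\xi$, and you then need a single $y^*$ with $\sup_t|y^*(t)-y_\xi(t)|_t<r$ for cofinally many $\xi$. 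No such $y^*$ need exist: take $X_t=[0,1]$ for $t$ in an uncountable $T$ and let $y_\xi$ be the characteristic function of $\{t_\xi\}$ for distinct $t_\xi$; these all lie in a common uniform ball of radius $1$, yet no point of $\prod_t X_t$ is within uniform distance $1/3$ of more than one of them. Correspondingly, your finite sets $T_\xi$ of ``coordinates where $n_\xi$ deviates essentially from a baseline'' need not be finite at all --- two points of the product can differ from any baseline in every coordinate --- so Shanin's Lemma has nothing to bite on. Localizing inside a common uniform ball does not repair this.

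The repair, which is the paper's actual proof, is to notice that $X$ carries the \emph{product} (Tychonoff) topology as a subspace of $\prod_t X_t$; only the density of $X$ in $Y$ is taken in the uniform topology. One verifies the caliber property on \emph{basic} open sets, i.e.\ traces on $X$ of cylinders $U_\gamma=\prod_t U_\gamma^{(t)}$ with finite support $T_\gamma=\{t:U_\gamma^{(t)}\ne X_t\}$. Shanin's Lemma applied to these genuinely finite supports yields a $\Delta$-system with core $S$; inside each $U_\gamma^{(t)}$ one fixes a ball of radius $\delta_\gamma$, passes to $\aleph$-many $\gamma$ with $\delta_\gamma\ge 2/n$, uses separability of the \emph{finite} product $\prod_{s\in S}X_s$ to stabilize the core data for $\aleph$-many $\gamma$, defines $y_0\in Y$ by the stabilized values on $S$ and the individual centers on the pairwise disjoint petals $T_\gamma\setminus S$, and only then invokes uniform density of $X$ in $Y$ to produce $x_0\in X$ within $1/n$ of $y_0$ at every coordinate; the triangle inequality puts $x_0$ in all $\aleph$-many cylinders simultaneously. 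Because each cylinder constrains only finitely many coordinates, the uniform approximation of $y_0$ suffices, which is precisely what fails for your uniform balls. If you replace your uniform-ball neighborhoods of the $y_\xi$ by basic cylinder neighborhoods contained in $U_\xi$ and then run your $\Delta$-system argument, you recover the paper's proof.
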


\begin{proof} Let $\aleph$ be an uncountable regular cardinal, $(V_{\gamma}:\gamma\in \Gamma)$ be a family of nonempty open in $X$ basic sets $V_{\gamma}$, besides $|\Gamma|=\aleph$ and $(U_{\gamma}:\gamma\in \Gamma)$ be a family of nonempty open in $\prod\limits_{t\in T}X_{t}$ basic sets $U_{\gamma}=\prod\limits_{t\in T}U_{\gamma}^{(t)}$ such that $V_{\gamma}=U_{\gamma}\bigcap\prod\limits_{t\in T}X_{t}$ for every $\gamma\in \Gamma$. Put $T_{\gamma}=\{t\in T:U_{\gamma}^{(t)}\ne X_t\}$ for every $\gamma\in \Gamma$. Taking into account the regularity of $\aleph$ and lemma 3.4 we can propose that all finite sets $T_{\gamma}$ have the same cardinality and $T_{\gamma}\cap T_{\beta}= S$ for every distinct $\gamma, \beta \in \Gamma$ and some finite set $S\subseteq T$. For every $\gamma\in \Gamma$ we choose points $x_{\gamma}^{(t)}\in X_t$ for $t\in T_{\gamma}$ and real $\delta_{\gamma}> 0$ such that $\{x_t\in X_t: |x_t-x_{\gamma}^{(t)}|_t<\delta_{\gamma}\}\subseteq U_{\gamma}^{(t)}$. Since $\aleph$ is an uncountable regular cardinal, there exist $n\in\mathbb N$ and $\Gamma'\subseteq \Gamma$ such that $|\Gamma'|=\aleph$ and $\delta_{\gamma}\geq \frac{2}{n}$ for every $\gamma\in\Gamma'$. In separable metric space $\tilde{X}= \prod\limits_{s\in S}X_{s}$ we find a point $\tilde x\in \tilde X$ such that $|\tilde x(s)-x_{\gamma}^{(s)}|_s<\frac{1}{n}$ for every $s\in S$ and $\gamma\in \Gamma''$, where $\Gamma'' \subseteq \Gamma'$ is a set with $|\Gamma''|=\aleph$.

We put $T'=\bigcup\limits_{\gamma \in \Gamma''}S_{\gamma}$, where $S_{\gamma}=T_{\gamma}\setminus S$. Note that $T'=\O$ or $|T'|=\aleph$. If $T'=\O$, that is $T_{\gamma}=S$ for every $\gamma\in \Gamma''$, then we choose a point $y_0\in Y$ such that $y_0(s)=\tilde{x}(s)$ for every $s\in S$. Since the set $X$ is dense in $Y$, there exists a point $x_0\in X$ such that $|x_0(t)-y_0(t)|_t<\frac{1}{n}$ for every $t\in T$. Then for every $\gamma\in \Gamma''$ and $t\in T_{\gamma}$ we have
$$
|x_0(t)-x_{\gamma}^{(t)}|_t\leq |x_0(t)-y_0(t)|_t +
|y_0(t)-x_{\gamma}^{(t)}|_t =
$$
$$
=|x_0(t)-y_0(t)|_t + |\tilde x(t)-x_{\gamma}^{(t)}|_t< \frac{1}{n}
+ \frac{1}{n} \leq \delta_{\gamma}.
$$
Thus, $x_0(t)\in U_{\gamma}^{(t)}$ for every $\gamma\in \Gamma''$ and $t\in T_{\gamma}$, that is $x_0\in V_{\gamma}$ for every $\gamma \in \Gamma''$.

Let $|T'|=\aleph$. We choose a point $y_0\in Y$ such that $y_0(s)=\tilde{x}(s)$ for every $s\in S$ and $y_0(s)=x_{\gamma}^{(s)}$ for every $\gamma \in \Gamma''$ and $s\in S_{\gamma}$. Then, analogously as in the previous case, for some point $x_0\in X$ such that $|x_0(t)-y_0(t)|_t<\frac{1}{n}$ for every $t\in T$, we have $x_0\in V_{\gamma}$ for every $\gamma \in \Gamma''$.

Thus, every uncountable cardinal $\aleph$ is the caliber of the Baire space $X$. According to Proposition 3.2,  $X$ is a strongly Baire space.
\end{proof}

\begin{corollary}\label{c:3.6}  Let $X\subseteq [0,1]^T$ be a Baire space which is dense in the space $Y=[0,1]^T$ with the topology of the uniform convergence on $T$. Then $X$ is a strongly Baire space.
\end{corollary}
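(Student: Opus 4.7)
The plan is to recognize that Corollary 3.6 is a direct specialization of Proposition 3.5 and apply that proposition to the specific family of factors $X_t = [0,1]$.

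First I would set $X_t = [0,1]$ for each $t \in T$, equipped with the standard Euclidean metric $|x-y|_t = |x-y|$. This family $(X_t : t \in T)$ is a family of separable metric spaces, since $[0,1]$ with its usual metric is obviously separable (the rationals in $[0,1]$ form a countable dense subset). So $\prod_{t \in T} X_t = [0,1]^T$, and the topology of uniform convergence on $T$ coming from the uniform metric $\sup_{t \in T} |x(t)-y(t)|$ coincides with the topology considered in Proposition 3.5.

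Next I would verify the hypotheses of Proposition 3.5: $X \subseteq \prod_{t\in T} X_t = [0,1]^T$ is a Baire space (given in the corollary), and $X$ is dense in $Y = [0,1]^T$ with the topology of uniform convergence on $T$ (also given). Applying Proposition 3.5, we conclude immediately that $X$ is a strongly Baire space.

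Since the proof is simply an instantiation of the more general result, no step presents a real obstacle; the only thing to check is that $[0,1]$ really qualifies as a separable metric space factor and that the uniform topology on $[0,1]^T$ matches the one used in Proposition 3.5, which is immediate.
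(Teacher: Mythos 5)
Your proposal is correct and matches the paper's intent exactly: Corollary 3.6 is stated as an immediate consequence of Proposition 3.5, obtained by taking each factor $X_t=[0,1]$ with the usual metric, and the paper gives no further argument. Your verification that $[0,1]$ is a separable metric space and that the uniform topologies agree is all that is needed.
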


\begin{proposition} \label{p:3.7} Let $X$ be a strongly Baire space, $Y$ be a topological space, $f:X\to Y$ be a continuous surjective mapping such that for every nowhere dense in $Y$ set $B$ the set $f^{-1}(B)$ is nowhere dense in $X$. Then $Y$ is a strongly Baire space.
\end{proposition}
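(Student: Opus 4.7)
My plan is to verify the strongly Baire condition for $Y$ by pulling everything back to $X$ along $f$, applying the strongly Baire property of $X$, and then pushing the conclusion forward using the nowhere-dense-preserving hypothesis on $f$.

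Concretely, suppose we are given an ordinal $\alpha$ together with increasing nets $(A_{\xi}:\xi<\alpha)$ and $(B_{\xi}:\xi<\alpha)$ of closed subsets of $Y$ such that $(B_{\xi})$ occupies $(A_{\xi})$. I would set $A'_{\xi}=f^{-1}(A_{\xi})$ and $B'_{\xi}=f^{-1}(B_{\xi})$. Continuity of $f$ makes each of these closed in $X$, the nets remain increasing, and the occupation condition transfers at once because $f^{-1}$ preserves inclusions and commutes with unions. Hence the strongly Baire property of $X$ yields
$$
{\rm int}\bigl(\bigcup_{\xi<\alpha}A'_{\xi}\bigr)\subseteq\overline{\bigcup_{\xi<\alpha}{\rm int}(B'_{\xi})}.
$$

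To deduce the analogous inclusion for $(A_\xi),(B_\xi)$ in $Y$, fix $V={\rm int}(\bigcup_{\xi<\alpha}A_{\xi})$ and an arbitrary nonempty open $W\subseteq V$; the goal is to produce a point of $W\cap\bigcup_{\xi}{\rm int}(B_{\xi})$. Since $f$ is continuous and surjective, $f^{-1}(W)$ is a nonempty open set contained in $\bigcup_{\xi}A'_{\xi}$, hence in ${\rm int}(\bigcup_{\xi}A'_{\xi})$. By the previous display, $f^{-1}(W)$ must meet the open set $\bigcup_{\xi}{\rm int}(B'_{\xi})$, so there is an index $\xi$ and a nonempty open $O\subseteq f^{-1}(W)\cap{\rm int}(B'_{\xi})$.

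The main obstacle is that continuity alone only yields $f^{-1}({\rm int}(C))\subseteq{\rm int}(f^{-1}(C))$, and the reverse may fail; this is exactly where the hypothesis on $f$ is used. Since $B_{\xi}$ is closed, the set $N_{\xi}=B_{\xi}\setminus{\rm int}(B_{\xi})$ is nowhere dense in $Y$, so by assumption $f^{-1}(N_{\xi})=B'_{\xi}\setminus f^{-1}({\rm int}(B_{\xi}))$ is nowhere dense in $X$. Consequently $O\setminus\overline{f^{-1}(N_{\xi})}$ is open, dense in $O$ (hence nonempty), and contained in $f^{-1}({\rm int}(B_{\xi}))$; any point $x$ in it satisfies $f(x)\in W\cap{\rm int}(B_{\xi})$. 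This establishes $V\subseteq\overline{\bigcup_{\xi}{\rm int}(B_{\xi})}$ and completes the verification that $Y$ is strongly Baire.
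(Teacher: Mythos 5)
Your argument is correct, and it shares the paper's overall strategy --- pull the nets back along $f$, apply the strongly Baire property of $X$, then push the conclusion forward using the hypothesis on nowhere dense sets --- but the push-forward step is carried out in a genuinely different way. The paper localizes first: it fixes $y_0\in {\rm int}(\bigcup_{\xi}A_{\xi})$ and a \emph{closed} neighbourhood $V$ of $y_0$, pulls back the truncated nets $(A_{\xi}\cap V)$ and $(B_{\xi}\cap V)$, obtains an index $\gamma$ with ${\rm int}\bigl(f^{-1}(B_{\gamma}\cap V)\bigr)\ne\emptyset$, and then invokes the hypothesis in contrapositive form applied to the closed set $B_{\gamma}\cap V$ itself: since its preimage is not nowhere dense, $B_{\gamma}\cap V$ is not nowhere dense, hence has nonempty interior meeting $V$. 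You instead pull back the full nets once and for all, and the device that converts ``${\rm int}(f^{-1}(B_{\xi}))$ meets $f^{-1}(W)$'' into ``${\rm int}(B_{\xi})$ meets $W$'' is the direct application of the hypothesis to the boundary $N_{\xi}=B_{\xi}\setminus {\rm int}(B_{\xi})$, which is nowhere dense because $B_{\xi}$ is closed. Both uses of the hypothesis are legitimate, and your boundary trick is the right substitute for the false inclusion ${\rm int}(f^{-1}(C))\subseteq f^{-1}({\rm int}(C))$. Your version has a small technical advantage: by testing against arbitrary nonempty open $W\subseteq {\rm int}(\bigcup_{\xi}A_{\xi})$ you establish the closure inclusion at every point, whereas the paper's argument only tests closed neighbourhoods of $y_0$ and so implicitly needs such neighbourhoods to be cofinal in the neighbourhood filter (as in a regular space $Y$). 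Conversely, the paper's localization uses only the weak consequence of strong Baireness that a nonempty interior of $\bigcup_{\xi}\tilde{A}_{\xi}$ forces some ${\rm int}(\tilde{B}_{\gamma})\ne\emptyset$, while you use the full closure inclusion; since $X$ is assumed strongly Baire in full, this costs nothing.
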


\begin{proof} Let $\alpha$ be a limited ordinal, $(A_{\xi}:\xi<\alpha)$ and $(B_{\xi}:\xi<\alpha)$ be increasing sequences of closed in $Y$ sets $A_{\xi}$ and $B_{\xi}$ such that $(B_{\xi}:\xi<\alpha)$ occupiers $(A_{\xi}:\xi<\alpha)$.
Show that
$$
{\rm int}(\bigcup\limits_{\xi<\alpha} A_{\xi}) \subseteq \overline{\bigcup\limits_{\xi<\alpha} {\rm int}(B_{\xi})}.
$$

We take a point $y_0\in {\rm int}(\bigcup\limits_{\xi<\alpha} A_{\xi})$ and a closed in $Y$ neighborhood $V$ of $y_0$. For every $\xi<\alpha$ we put 
$\tilde{A}_{\xi}=f^{-1}(A_{\xi}\cap V)$, $\tilde{B}_{\xi}=f^{-1}(B_{\xi}\cap V)$. Clearly $\tilde{A}_{\xi}$ and $\tilde{B}_{\xi}$ are closed in $X$, moreover $\tilde{A}_{\xi}\subseteq \tilde{B}_{\xi}$ for every $\xi<\alpha$. Besides, for every limited ordinal $\beta<\alpha$ we have 
$$
\tilde{A}_{\beta}=f^{-1}(A_{\beta}\cap V)\subseteq f^{-1}(\bigcup\limits_{\xi<\beta}(B_{\xi}\cap V))=\bigcup\limits_{\xi<\beta}f^{-1}(B_{\xi}\cap
V)=\bigcup\limits_{\xi<\beta}\tilde{B}_{\xi}.
$$
Thus, the sequence $(\tilde{B}_{\xi}:\xi<\alpha)$ occupies the sequence $(\tilde{A}_{\xi}:\xi<\alpha)$. Since a mapping $f$ is continuous and the set 
$\bigcup\limits_{\xi<\alpha}(A_{\xi}\cap V)$ is a neighborhood of $y_0$, the set $f^{-1}(\bigcup\limits_{\xi<\alpha}(A_{\xi}\cap V))=\bigcup\limits_{\xi<\alpha}\tilde{A}_{\xi}$ is a neighborhood of every point $x\in f^{-1}(y_0)$, in particular, ${\rm int}(\bigcup\limits_{\xi<\alpha}\tilde{A}_{\xi})\ne\O$. Taking into account that $X$ is a strongly Baire space, we obtain that $\bigcup\limits_{\xi<\alpha}{\rm int}(\tilde{B}_{\xi})\ne\O$, that is there exists $\gamma<\alpha$ such that ${\rm int}(\tilde{B}_{\gamma})\ne\O$. The set $\tilde{B}_{\gamma}$ is not nowhere dense in $X$, therefore the Proposition conditions imply that the set $B_{\gamma}\cap V$ is nowhere dense in $Y$. Since $B_{\gamma}\cap V$ is closed, ${\rm int}(B_{\gamma}\cap V)\ne\O$. Thus, ${\rm int}(B_{\gamma})\cap V\ne \O$ and $y_0\in \overline{\bigcup\limits_{\xi<\alpha}{\rm int}(B_{\xi})}$.
\end{proof}

\section{Metrizable compacts in space of continuous functions}

In this section we investigate sufficient conditions on a topological space $X$ for the metrizability of every compacts $Y\subseteq C_p(X)$.

Recall that {\it a topological space $X$ has countable chain condition} if every disjoint system of open sets is at most countable.

\begin{theorem}\label{th:4.1} Let $X$ be a strongly Baire space with the countable chain condition, in particular, a Baire space for which every regular uncountable cardinal is the caliber. Then every compact $Y\subseteq C_p(X)$ is metrizable.\end{theorem}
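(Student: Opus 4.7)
The plan is to use the Namioka property of $X$, guaranteed by Theorem~\ref{th:2.3}, to produce a countable $S\subseteq X$ such that the restriction map $Y\to\mathbb{R}^S$, $y\mapsto y|_S$, is injective. Since $Y$ is compact Hausdorff and the countable product $\mathbb{R}^S$ is metrizable, this forces $Y$ to be metrizable. Concretely, view $Y\subseteq\mathbb{R}^X$ so that the evaluation $e:X\times Y\to\mathbb{R}$, $e(x,y)=y(x)$, is separately continuous. Because $X$ is strongly Baire, it is Baire and, by Theorem~\ref{th:2.3}, a Namioka space, so $e$ has the Namioka property. Applying Proposition~\ref{p:2.2} with $T=X$ then gives: for every nonempty open $U\subseteq X$ and every $\varepsilon>0$ there exist a nonempty open $U_0\subseteq U$ and a countable $S_0\subseteq X$ such that $|y'(x)-y''(x)|\le\varepsilon$ whenever $x\in U_0$ and $y',y''\in Y$ agree on $S_0$.

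The second step is to globalize this local information using the countable chain condition. For each $n\in\mathbb{N}$ I would call a nonempty open $V\subseteq X$ \emph{$n$-good} if it admits a countable witness $S_V\subseteq X$ with $|y'(x)-y''(x)|\le 1/n$ for all $x\in V$ and all $y',y''\in Y$ agreeing on $S_V$. Fix a maximal disjoint family $\mathcal{U}_n$ of $n$-good open sets; the countable chain condition forces $|\mathcal{U}_n|\le\aleph_0$, and applying the local property to any nonempty open subset of $X\setminus\overline{\bigcup\mathcal{U}_n}$ would violate maximality, so $W_n=\bigcup\mathcal{U}_n$ is dense in $X$. Set $S_n=\bigcup_{V\in\mathcal{U}_n}S_V$; then $S_n$ is countable and $y'|_{S_n}=y''|_{S_n}$ implies $|y'(x)-y''(x)|\le 1/n$ for every $x\in W_n$.

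Finally, put $S=\bigcup_{n}S_n$ and $W=\bigcap_{n}W_n$; then $S$ is countable, and $W$ is dense in $X$ by the Baire property. If $y',y''\in Y$ satisfy $y'|_S=y''|_S$, then $|y'(x)-y''(x)|\le 1/n$ on $W_n$ for every $n$, so $y'=y''$ on the dense set $W$, and therefore $y'=y''$ on $X$ by continuity. Thus the continuous restriction $r:Y\to\mathbb{R}^S$ is injective, and since $Y$ is compact Hausdorff, $r$ is a homeomorphism onto its image inside the metrizable space $\mathbb{R}^S$, so $Y$ is metrizable. The step I expect to be the main obstacle is the globalization in the second paragraph: Proposition~\ref{p:2.2} only provides a witness inside a single open set, and it is precisely the countable chain condition that upgrades a maximal disjoint family of witnesses to a countable one, making the amalgamation $S$ countable and its control region $\bigcap_n W_n$ dense; without CCC, the union of witnesses would in general have cardinality too large to embed $Y$ into a metrizable space.
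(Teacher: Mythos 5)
Your proof is correct and takes essentially the same route as the paper: the paper's argument is a two-line sketch that passes from Theorem~\ref{th:2.3} to Proposition~\ref{p:2.2} and asserts that ``it is easy to construct an at most countable set $A\subseteq X$ with $y'|_A\ne y''|_A$ for distinct $y',y''\in Y$,'' and your maximal-disjoint-family globalization under CCC is exactly the construction the paper leaves implicit. The final step (a continuous injection of the compact $Y$ into the metrizable $\mathbb R^S$) matches the paper's conclusion as well.
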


\begin{proof} Firstly, we note that for a completely regular space $X$ this theorem follows from Theorem 2.3 and [13, Theorem 2 and Proposition 5].

In general case for a Namioka space $X$ with the countable chain condition and a compact $Y\subseteq C_p(X)$, using Theorem 2.2 it easy to construct an at most countable set $A\subseteq X$ such that $y'|_A\ne y''|_A$ for every distinct $y',y''\in Y$. This implies the metrizability of $Y$.
\end{proof}

On other hand, the following result is true. It is an analog of Theorem 3.1 from [14] where a similar relation between the weight of a compact space $X$ and the caliber of $C_p(X)$ is obtained.

\begin{theorem}\label{th:4.2} Let $X$ be a topological space, $Y\subseteq C_p(X)$ be a nonmetrizable compact and $\aleph={\rm cof}(w(Y))>\aleph_0$. Then $\aleph$ is not caliber of $X$.
\end{theorem}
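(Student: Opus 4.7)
The plan is to construct, from the given compact $Y$, a family of $\aleph$ nonempty open subsets of $X$ such that no subfamily of cardinality $\aleph$ has nonempty intersection; this shows $\aleph$ is not a caliber. Put $\kappa = w(Y)$ and fix a strictly increasing cofinal sequence $(\kappa_{\alpha} : \alpha < \aleph)$ in $\kappa$.

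I would start from the standard fact that there exists a set $A \subseteq X$ with $|A| = \kappa$ on which the restriction map $\pi_A : Y \to C_p(A)$, $\pi_A(y) = y|_A$, is injective. Indeed, the subspace topology on $Y$ inherited from $C_p(X)$ has a base of size $w(Y) = \kappa$ consisting of basic open sets, each depending on finitely many coordinates from $X$; letting $A$ be the union of those coordinates gives $|A| \leq \kappa$ with $\pi_A$ injective (two points of $Y$ agreeing on $A$ would belong to exactly the same basic sets), while $|A| < \kappa$ would imply $w(Y) \leq |A| \cdot \aleph_0 < \kappa$. Enumerate $A = \{x_\beta : \beta < \kappa\}$ and set $A_\alpha = \{x_\beta : \beta < \kappa_\alpha\}$. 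Since $|A_\alpha| < \kappa$, the restriction $\pi_{A_\alpha}$ is not injective, and one picks distinct $y_\alpha, y'_\alpha \in Y$ with $y_\alpha|_{A_\alpha} = y'_\alpha|_{A_\alpha}$, together with a point $z_\alpha \in A$ on which they differ. A pigeonhole on the values $\tfrac{1}{n}$ produces $n \in \mathbb N$ and $\Gamma \subseteq \aleph$ of cardinality $\aleph$ with $|y_\alpha(z_\alpha) - y'_\alpha(z_\alpha)| > \tfrac{1}{n}$ for every $\alpha \in \Gamma$. Now put
$$
U_\alpha = \{x \in X : |y_\alpha(x) - y'_\alpha(x)| > \tfrac{1}{2n}\}, \quad \alpha \in \Gamma;
$$
each $U_\alpha$ is open in $X$ and contains $z_\alpha$, hence is nonempty.

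To conclude, suppose for contradiction that $B \subseteq \Gamma$ with $|B| = \aleph$ and $x_0 \in \bigcap_{\alpha \in B} U_\alpha$. By compactness of $Y \times Y$ the net $(y_\alpha, y'_\alpha)_{\alpha \in B}$ has a cluster point $(y, y')$; evaluating a convergent subnet at $x_0$ yields $|y(x_0) - y'(x_0)| \geq \tfrac{1}{2n}$, so $y \neq y'$. On the other hand, for each $x = x_\beta \in A$, the regularity of $\aleph$ together with the cofinality of $(\kappa_\alpha)$ produces $\alpha_0 < \aleph$ with $\kappa_{\alpha_0} > \beta$, so $x \in A_\alpha$ and hence $y_\alpha(x) = y'_\alpha(x)$ for every $\alpha \in B$ with $\alpha \geq \alpha_0$. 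A convergent subnet of $(y_\alpha, y'_\alpha)_{\alpha \in B}$ is eventually past $\alpha_0$, so the limit satisfies $y(x) = y'(x)$; as $x \in A$ was arbitrary, $y|_A = y'|_A$, forcing $y = y'$ by injectivity of $\pi_A$ and yielding the required contradiction. I expect the coordination between the cluster-point argument and the cofinality of $(\kappa_\alpha)$ in this last step to be the only real technical subtlety; the extraction of $A$ and the pigeonhole construction of the $U_\alpha$ are routine.
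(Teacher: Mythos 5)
Your proof is correct, but it takes a genuinely different route from the paper's. The paper fixes a homeomorphism $\varphi\colon Z\to Y$ with $Z\subseteq\mathbb R^{T}$ compact and $|T|=w(Y)$, considers the separately continuous function $f(x,z)=\varphi(z)(x)$, and decomposes $X$ into the closed sets $A_{\alpha}$ of those $x$ for which $f(x,\cdot)$ factors through the restriction to $T_{\alpha}$, where $(T_{\alpha}:\alpha<\omega)$ increases to $T$ with $|T_{\alpha}|<|T|$. The essential input there is that each $f(x,\cdot)$, being continuous on a compact subset of $\mathbb R^{T}$, depends on only countably many coordinates (Proposition 2.1 with $\varepsilon=0$), which together with ${\rm cof}(w(Y))>\aleph_0$ yields $X=\bigcup_{\alpha}A_{\alpha}$; the complements $U_{\alpha}=X\setminus A_{\alpha}$ then form a \emph{decreasing} transfinite family of nonempty open sets with empty intersection, so every size-$\aleph$ subfamily, being cofinal, automatically has empty intersection --- no compactness argument is needed at that stage. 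You instead work entirely inside $C_p(X)$: you extract a point-separating set $A\subseteq X$ of cardinality $w(Y)$, choose for each $\alpha$ a pair of functions agreeing on the initial segment $A_{\alpha}$ but differing on $A$, and take as $U_{\alpha}$ the locus where that pair differs by a definite amount, obtained by a pigeonhole from the regularity of $\aleph$. Your family is not nested, so you must exclude large subfamilies with a common point by the cluster-point argument in the compact $Y\times Y$. The trade-off: the paper's argument is shorter once the countable-dependence fact is in hand and yields the formally stronger conclusion of a decreasing $\aleph$-chain of nonempty open sets with empty intersection, while yours avoids the cube embedding and the dependence theorem entirely, using only compactness of $Y$ and elementary properties of the pointwise topology, at the price of the subnet bookkeeping. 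The steps you call routine do check out: nonmetrizability gives $w(Y)>\aleph_0$, any base of $Y$ refines to one of cardinality $w(Y)$ consisting of finitely-supported cylinders (giving $|A|=w(Y)$ and the injectivity of $\pi_A$), and the regularity of $\aleph$ makes every size-$\aleph$ subset $B$ cofinal in $\aleph$, which is exactly what the subnet needs to pass $\alpha_0$.
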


\begin{proof} Let $T$ be a set with $|T|=w(Y)$, $Z\subseteq\mathbb R^T$ be a compact such that there exists a homeomorphism $\varphi: Z\to Y$.

We consider a separately continuous function $f:X\times Z\to\mathbb R$, $f(x,z)=\varphi(z)(x)$. Let $\omega$ is the first ordinal with the cardinality $\aleph$ and $(T_{\alpha}:\alpha<\omega)$ is an increasing sequence of sets $T_{\alpha}\subseteq T$ such that $\bigcup\limits_{\alpha<\omega}T_{\alpha}=T$ and $|T_{\alpha}|<|T|$ for every $\alpha<\omega$. For each $\alpha<\omega$ we put 
$$A_{\alpha}=\{x\in X: f(x,z')=f(x,z'')\mbox{\,\,for\,\,every\,\,}z', z''\in Z
\mbox{\,\,with\,\,}z'|_{T_{\alpha}}=z''|_{T_{\alpha}}\}.$$ 
It follows from the continuity of $f$ with respect to $x$ that all sets $A_{\alpha}$ are closed in $X$. Since $f$ is continuous with respect to the second variable, for every $x\in X$ there exists an at most countable set $S\subseteq T$ such that $f(x,z')=f(x,z'')$ for every $z',z''\in Z$ with $z'|_S=z''|_S$. Therefore $X=\bigcup\limits_{\alpha<\omega}A_{\alpha}$, that is $\bigcap\limits_{\alpha<\omega}U_{\alpha}=\O$, where $U_{\alpha}=X\setminus A_{\alpha}$. Taking into account that the sequence $(U_{\alpha}:\alpha< \omega)$ is decreasing and $\omega$ is the first ordinal with cardinality $\aleph$, we obtain that 
$\bigcap\limits_{\alpha\in I}U_{\alpha}=\O$ for every set of ordinals $I\subseteq[0,\omega)$ with $|I|=\aleph$. Thus, $\aleph$ is not caliber of $X$.
\end{proof}

Thus, the following question naturally arises. It is an analog of Question 3.6 from [14].

\begin{question}\label{q:4.3} Let $X$ be a topological space for which every uncountable cardinal is caliber. Is every compact $Y\subseteq C_p(X)$ metrizable?\end{question}

Note that analogously as in [14, Theorem 3.7] it can proved using Theorem 4.2 that this question has a positive answer in the case $\aleph_2=2^{\aleph_1}$.

\begin{proposition}\label{p:4.4} Let $Y$ be a compact space such that for every set $B\subseteq Y$ with $|B|\leq \aleph_1$ the compact set $\overline{B}$ is metrizable. Then $Y$ is metrizable.\end{proposition}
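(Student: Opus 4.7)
The plan is to first reduce the problem to showing that $Y$ is separable. Indeed, once $Y=\overline{D}$ for some countable set $D$, we have $|D|\leq\aleph_{0}\leq\aleph_{1}$, and the hypothesis immediately yields that $Y=\overline{D}$ is metrizable. So the whole task is reduced to establishing separability of $Y$.

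To show $Y$ is separable, I would argue by contradiction, assuming $Y$ is not separable. Then by transfinite recursion I construct a sequence $(y_{\alpha}:\alpha<\omega_{1})$ in $Y$ satisfying $y_{\alpha}\notin\overline{\{y_{\beta}:\beta<\alpha\}}$ at every stage $\alpha<\omega_{1}$. Such a choice is always available because at any $\alpha<\omega_{1}$ the set $\{y_{\beta}:\beta<\alpha\}$ is countable, so by non-separability its closure is a proper closed subset of $Y$ and one may pick $y_{\alpha}$ from the nonempty complement.

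Set $B=\{y_{\alpha}:\alpha<\omega_{1}\}$, so that $|B|\leq\aleph_{1}$. By hypothesis $\overline{B}$ is compact and metrizable, hence second countable, hence hereditarily separable. Consequently $B$ itself possesses a countable dense subset $\{y_{\alpha_{i}}:i<\omega\}$. Putting $\alpha^{*}=\sup_{i<\omega}\alpha_{i}$, which lies in $\omega_{1}$ by the regularity of $\omega_{1}$, I obtain $y_{\alpha^{*}+1}\in B\subseteq\overline{\{y_{\alpha_{i}}:i<\omega\}}\subseteq\overline{\{y_{\gamma}:\gamma\leq\alpha^{*}\}}$, which flatly contradicts the defining property of $y_{\alpha^{*}+1}$.

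I do not foresee any substantial obstacle in carrying out this plan. The only non-routine ingredient is the standard fact that a compact metrizable space is second countable and therefore hereditarily separable, and this is exactly what prevents an uncountable left-separated family such as $(y_{\alpha})_{\alpha<\omega_{1}}$ from living inside $\overline{B}$; the rest is routine transfinite bookkeeping.
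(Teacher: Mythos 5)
Your proof is correct and follows essentially the same route as the paper: reduce to separability, build a left-separated $\omega_1$-sequence assuming non-separability, and derive a contradiction from the (hereditary) separability of the metrizable compact $\overline{B}$. You merely make explicit the final step (taking $\alpha^{*}=\sup_i\alpha_i<\omega_1$ and contradicting the choice of $y_{\alpha^{*}+1}$) that the paper leaves to the reader.
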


\begin{proof} It is enough to prove that $Y$ is separable. Suppose that $Y$ is not separable. Then it easy to construct a sequence $(y_{\xi}: \xi<\aleph_1)$ of points $y_{\xi}\in Y$ such that $y_{\alpha}\not \in \overline{\{y_{\xi}:\xi<\alpha\}}$ for every $\alpha<\aleph_1$. Since the space $Z=\overline{\{y_{\xi}:\xi<\aleph_1\}}$ is metrizable, there exists an at most countable subset of the set $\{y_{\xi}:\xi<\aleph_1\}$ which is dense in $Z$. But this contradicts to the choice of $y_{\xi}$.\end{proof}

\begin{theorem}\label{th:4.5} Let $\aleph_i$ be the first uncountable cardinal with ${\rm cof}(\aleph_i)=\aleph_0$. Suppose that $\aleph_i>2^{\aleph_1}$. Then for every topological space $X$ for which every regular cardinal $\aleph\in [\aleph_1, 2^{\aleph_1}]$ is its caliber, every compact $Y\subseteq C_p(X)$ is metrizable.\end{theorem}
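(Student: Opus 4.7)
The plan is to argue by contradiction: suppose some compact $Y\subseteq C_p(X)$ is nonmetrizable, and produce a regular cardinal $\aleph\in[\aleph_1,2^{\aleph_1}]$ which fails to be a caliber of $X$, contradicting the assumption on $X$.

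First I would apply Proposition~\ref{p:4.4} to reduce to a nonmetrizable compact of small density: if $Y$ is nonmetrizable, there exists $B\subseteq Y$ with $|B|\le\aleph_1$ and $\overline{B}$ nonmetrizable, so after replacing $Y$ by $\overline{B}$ I may assume $d(Y)\le\aleph_1$. The next step is the standard bound $w(Y)\le 2^{d(Y)}\le 2^{\aleph_1}$ for compact Hausdorff spaces: a continuous function on $Y$ is determined by its values on a dense subset, so $|C(Y)|\le(2^{\aleph_0})^{\aleph_1}=2^{\aleph_1}$, and the preimages of open sets of $\mathbb{R}$ under the members of $C(Y)$ form a subbasis for the topology of $Y$ (using functional separation on compacta). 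Since $Y$ is a nonmetrizable compact, it is not second countable, so $w(Y)\in[\aleph_1,2^{\aleph_1}]$.

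Now set $\aleph:={\rm cof}(w(Y))$; cofinalities are regular and $\aleph\le w(Y)\le 2^{\aleph_1}$. The definition of $\aleph_i$ as the first uncountable cardinal with countable cofinality, combined with $\aleph_i>2^{\aleph_1}$, implies that every uncountable cardinal in $[\aleph_1,2^{\aleph_1}]$ has uncountable cofinality, so $\aleph\ge\aleph_1$. Hence $\aleph$ is a regular cardinal in $[\aleph_1,2^{\aleph_1}]$, which by hypothesis is a caliber of $X$. On the other hand, since $\aleph>\aleph_0$, Theorem~\ref{th:4.2} applied to the nonmetrizable compact $Y\subseteq C_p(X)$ says that $\aleph$ is \emph{not} a caliber of $X$, the desired contradiction. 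The only nonformal step is the weight estimate $w(Y)\le 2^{d(Y)}$ for compact Hausdorff spaces, which is routine; the rest is cardinal arithmetic and a direct appeal to Theorem~\ref{th:4.2}.
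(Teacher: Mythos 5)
Your argument is correct and follows essentially the same route as the paper: reduce via Proposition~\ref{p:4.4} to a compact of density at most $\aleph_1$, bound the weight by $2^{\aleph_1}$, and use the hypothesis $\aleph_i>2^{\aleph_1}$ to see that ${\rm cof}(w(Y))$ is an uncountable regular cardinal in $[\aleph_1,2^{\aleph_1}]$, so that Theorem~\ref{th:4.2} contradicts the caliber assumption. You merely spell out the cardinal arithmetic and the weight estimate that the paper leaves implicit.
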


\begin{proof} For every compact space $Y\subseteq C_p(X)$ with $d(Y)\leq\aleph_1$ we have $w(Y)\leq 2^{\aleph_1}$. It follows from Theorem 4.2 and the condition $\aleph_i>2^{\aleph_1}$ that $w(Y)=\aleph_0$, that is $Y$ is metrizable. It remains yo use Proposition 4.4.
\end{proof}

The following result can be proved analogously.

\begin{theorem} \label{th:4.6} Let $\aleph_i$ be the first uncountable cardinal with ${\rm cof}(\aleph_i)=\aleph_0$ and $X$ be a topological space such that  $d(X)<\aleph_i$ and every regular cardinal $\aleph$ is a caliber of $X$. Then every compact $Y\subseteq C_p(X)$ is metrizable.
\end{theorem}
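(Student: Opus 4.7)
The plan is to follow the spirit of the proof of Theorem~4.5, but to avoid any recourse to Proposition~4.4: instead of first reducing density and then bounding the weight by $2^{\aleph_1}$, I would bound the weight of every compact $Y\subseteq C_p(X)$ directly by $d(X)$, which is strictly less than $\aleph_i$ by hypothesis.

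Fix a compact $Y\subseteq C_p(X)$ and, assuming $d(X)$ is infinite (the finite case is trivial, since then $C_p(X)\subseteq\mathbb{R}^X$ is already metrizable), pick a dense set $D\subseteq X$ with $|D|=d(X)<\aleph_i$. I would consider the restriction map
$$
\rho\colon C_p(X)\to \mathbb{R}^{D},\qquad \rho(f)=f|_{D},
$$
which is continuous and injective on $C_p(X)$ because any continuous function into the Hausdorff space $\mathbb{R}$ is determined by its values on a dense subset. Therefore $\rho|_{Y}$ is a continuous injection from the compact space $Y$ into a Hausdorff space, hence a homeomorphism onto $\rho(Y)\subseteq \mathbb{R}^{D}$, and this yields
$$
w(Y)=w(\rho(Y))\le w(\mathbb{R}^{D})=|D|=d(X)<\aleph_i.
$$

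Next, suppose for contradiction that $Y$ is not metrizable, so that $w(Y)>\aleph_0$. By the choice of $\aleph_i$ as the least uncountable cardinal with ${\rm cof}(\aleph_i)=\aleph_0$, every uncountable cardinal strictly below $\aleph_i$ has uncountable cofinality; in particular $\aleph:={\rm cof}(w(Y))$ is a regular uncountable cardinal. Theorem~4.2 then asserts that $\aleph$ is not a caliber of $X$, contradicting the hypothesis that every regular cardinal is a caliber of $X$. Hence $Y$ must be metrizable.

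The only delicate point is the bound $w(Y)\le d(X)$; once that is in hand, the arithmetic of cofinalities (no uncountable cardinal strictly below $\aleph_i$ has countable cofinality) combines with Theorem~4.2 to finish the proof, with no need to pass through Proposition~4.4 as in the proof of Theorem~4.5.
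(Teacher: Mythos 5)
Your proof is correct and takes essentially the same route as the paper: the paper's own proof of Theorem~4.6 likewise restricts to a dense set $A$ with $|A|=d(X)$, observes that $y\mapsto y|_A$ is a homeomorphic embedding of $Y$, concludes $w(Y)\le d(X)<\aleph_i$, and invokes Theorem~4.2 --- with no appeal to Proposition~4.4. Your additional detail (injectivity of the restriction map and the cofinality arithmetic showing ${\rm cof}(w(Y))$ is regular and uncountable) merely spells out what the paper leaves implicit.
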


\begin{proof} Note that for every compact space $Y\subseteq C_p(X)$ and every dense in $X$ set $A$ the mapping $\varphi: Y\to C_p(A)$, $\varphi(y)=y|_A$, is a homeomorphic embedding. Therefore $w(Y)\leq d(X)<\aleph_i$ and according to Theorem 4.2, $Y$ is metrizable.
\end{proof}

\bibliographystyle{amsplain}

\end{document}